%         "Using the LMS Class File"
% A Combined Sample File and Guide for Authors
% This file may be used as a template for writing a paper for submission to the LMS

\NeedsTeXFormat{LaTeX2e}

\documentclass{lms}
%Include your preferred graphics and mathematics packages here,
%using the command \usepackage{}

%The \newtheorem command is used to define theorem-like environments
%that normally REQUIRE A PROOF, for example:
\newtheorem{theorem}{Theorem}[section] % 1st argument is your name for it
\newtheorem{lemma}[theorem]{Lemma}     % 2nd argument is what is printed

%To control the numbering sequence of these environments, see
%Lamport's book on LaTeX [2, p. 193].

%The \newnumbered command can be used to define environments or
%independent statements that DO NOT REQUIRE A PROOF. The usual ones are:
\newnumbered{assertion}{Assertion}    % 1st argument is your name for it
\newnumbered{conjecture}{Conjecture}  % 2nd argument is what is printed
\newnumbered{definition}{Definition}
\newnumbered{hypothesis}{Hypothesis}
\newnumbered{remark}{Remark}
\newnumbered{note}{Note}
\newnumbered{observation}{Observation}
\newnumbered{problem}{Problem}
\newnumbered{question}{Question}
\newnumbered{algorithm}{Algorithm}
\newnumbered{example}{Example}
\newunnumbered{notation}{Notation} % This is usually unnumbered
% The numbering sequence of these environments can be controlled in the
% same way as for \newtheorem; see Lamport's book on LaTeX, p. 193.

% The default LMS numbering of equations in long papers is (1.1), (1.2), (2.1), etc.
% In short papers, to change the numbering to (1), (2), etc., 'uncomment' the next line.
% \simpleequations
% Otherwise, use the AMS \numberwithin command.

% TOP MATTER

\title{On subsets of the normal rational curve} % This is the full title of the paper
% Use lowercase letters in title except for proper names
% Avoid equations in title if possible
% Do not use the \thanks{} command; use \extraline{} instead (see below).

 \usepackage{amsmath}
\usepackage{amssymb}
\usepackage{graphicx}
\usepackage{amscd}

\author{Simeon Ball and Jan De Beule}

%\dedication{A dedication can be included here}

%Insert `2000 Mathematics Subject Classification' numbers here:
\classno{51E21, 15A03, 94B05}

\extraline{The first author acknowledges the support of the project MTM2014-54745-P of the Spanish {\em Ministerio de Econom\'ia y Competitividad.} The second author has been supported by a postdoctoral fellowship of the Research Foundation Flanders (Belgium) -- FWO, and by a research grant of the Research Foundation Flanders (Belgium) -- FWO, no. 1504514N.}

\begin{document}
\maketitle

\begin{abstract}
A normal rational curve of the $(k-1)$-dimensional projective space over ${\mathbb F}_q$ is an arc of size $q+1$,  since any $k$ points of the curve span the whole space.  In this article we will prove that if $q$ is odd then a subset of size $3k-6$ of a normal rational curve cannot be extended to an arc of size $q+2$. In fact, we prove something slightly stronger. Suppose that $q$ is odd and $E$ is a $(2k-3)$-subset of an arc $G$ of size $3k-6$. If $G$ projects to a subset of a conic from every $(k-3)$-subset of $E$ then $G$ cannot be extended to an arc of size $q+2$. Stated in terms of error-correcting codes we prove that a $k$-dimensional linear maximum distance separable code of length $3k-6$ over a field ${\mathbb F}_q$ of odd characteristic, which can be extended to a Reed-Solomon code of length $q+1$, cannot be extended to a linear maximum distance separable code of length $q+2$. 
\end{abstract}

\section{Introduction}

Let $\mathrm{V}_k({\mathbb F}_q)$ denote the $k$-dimensional vector space over ${\mathbb F}_q$, the finite field with $q$ elements.

Let $\mathrm{PG}_{k-1}({\mathbb F}_q)$ denote the $(k-1)$-dimensional projective space over ${\mathbb F}_q$.

An arc $S$ is a set of vectors of $\mathrm{V}_k({\mathbb F}_q)$ in which every subset of $S$ of size $k$ is a basis of the space, i.e. every $k$-subset is a set of linearly independent vectors. 
Equivalently, an arc of $\mathrm{PG}_{k-1}({\mathbb F}_q)$ is a set of points in which every subset of size $k$ spans the whole space.

The set of columns of a generator matrix of a $k$-dimensional linear maximum distance separable (MDS) code over ${\mathbb F}_q$ is an arc of $\mathrm{V}_k({\mathbb F}_q)$ and vice-versa, so arcs and linear MDS codes are equivalent objects. As in coding theory, we define the {\em weight} of a vector to be the number of non-zero coordinates that it has.

A normal rational curve is a set of $q+1$ vectors of $\mathrm{V}_k({\mathbb F}_q)$ 
$$
S=\{ (1,t,t^2,\ldots,t^{k-1}) \ | \ t \in {\mathbb F}_q \} \cup \{ (0,\ldots,0,1) \},
$$
or equivalently a set of $q+1$ points of $\mathrm{PG}_{k-1}({\mathbb F}_q)$. 

It is easy to see that a normal rational curve is an arc, since taking any $k$ elements of $S$ and we can form a $k \times k$ Vandermonde matrix whose determinant is non-zero. Thus, any $k$ vectors of $S$ are linearly independent.

In 1986, Seroussi and Roth \cite{RS1986} proved that if $4 \leqslant k \leqslant (q+3)/2$ then a normal rational curve cannot be extended to an arc of size $q+2$. In 1992, Storme \cite{Storme1992} extended this result to $4 \leqslant k \leqslant q+2-6\sqrt{q\ln q}$. In this article we will prove that if $q$ is odd then a subset of size $3k-6$ of a normal rational curve cannot be extended to an arc of size $q+2$.

Lemma~\ref{project} and Lemma~\ref{projecttoplane} follow from \cite[Theorem 27.5.1 and Lemma 27.5.2]{HT1991}. We include a proof for the sake of completeness.

\begin{lemma} \label{project}
The projection of a normal rational curve of $\mathrm{PG}_{k-1}({\mathbb F}_q)$ from any vector of the normal rational curve is contained in a normal rational curve of $\mathrm{PG}_{k-2}({\mathbb F}_q)$.
\end{lemma}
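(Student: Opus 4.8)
The plan is to realise the projection from a point $P$ of the curve as a quotient map and then to exhibit, by an explicit linear change of coordinates, that the images of the remaining $q$ points all lie on one standard normal rational curve of $\mathrm{PG}_{k-2}({\mathbb F}_q)$. Write $S=\{Q_t=(1,t,\ldots,t^{k-1}) : t\in{\mathbb F}_q\}\cup\{Q_\infty=(0,\ldots,0,1)\}$. Projecting from a point $P\in S$ means passing to the quotient $\mathrm{V}_k({\mathbb F}_q)/\langle P\rangle\cong\mathrm{V}_{k-1}({\mathbb F}_q)$; concretely I would choose a surjective linear map $\pi$ with $\ker\pi=\langle P\rangle$ and track the images $\pi(Q)$ for $Q\in S\setminus\{P\}$. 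The one structural fact I rely on is that any invertible linear map sends a normal rational curve to a normal rational curve, so it suffices to land inside the image of the standard curve under some element of $\mathrm{GL}_{k-1}({\mathbb F}_q)$.

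First I would dispose of the projection from $Q_\infty$: here $\pi$ simply deletes the last coordinate, so $Q_t\mapsto(1,t,\ldots,t^{k-2})$ and the image is literally a $q$-subset of the standard curve. For a finite centre $P=Q_a=(1,a,\ldots,a^{k-1})$ I would take $\pi(v)=(v_1-a v_0,\,v_2-a^2 v_0,\ldots,v_{k-1}-a^{k-1}v_0)$, which kills $P$ and is visibly onto $\mathrm{V}_{k-1}({\mathbb F}_q)$. Then $\pi(Q_t)=(t-a,\,t^2-a^2,\ldots,t^{k-1}-a^{k-1})$, and the key computation is the factorisation $t^i-a^i=(t-a)\sum_{j=0}^{i-1}t^j a^{i-1-j}$. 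Cancelling the common projective factor $t-a$ (legitimate since $t\neq a$) gives $\pi(Q_t)\sim(g_0(t),\ldots,g_{k-2}(t))$ with $g_m(t)=\sum_{j=0}^{m}a^{m-j}t^j$, a monic polynomial of degree exactly $m$. The transition matrix from $(1,t,\ldots,t^{k-2})$ to $(g_0,\ldots,g_{k-2})$ is lower triangular with $1$s on the diagonal, hence lies in $\mathrm{GL}_{k-1}({\mathbb F}_q)$; by the structural fact every $\pi(Q_t)$ lies on one fixed normal rational curve. Finally I would check that $\pi(Q_\infty)=(0,\ldots,0,1)$ is the image of the point at infinity of the standard curve under that same matrix, so it lies on the same curve.

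The main obstacle is not computational but lies in setting up the framework cleanly: one must verify that $\pi$ really induces the geometric projection (equivalently that $\ker\pi=\langle P\rangle$ is one-dimensional and $\pi$ is surjective) and treat the point $Q_\infty$ on the same footing as the finite points. It is worth noting that the conclusion is genuinely ``contained in'' rather than ``equal to'': the projection produces only $q$ points, namely the images of $S\setminus\{P\}$, whereas the target curve has $q+1$ points, the missing one corresponding to the parameter value $t=a$ that served as the centre. Alternatively, one can bypass the finite-centre computation by observing that the substitution $t\mapsto t-a$ is induced by an invertible (lower-triangular, binomial) linear map of $\mathrm{V}_k({\mathbb F}_q)$ carrying $P$ to $(1,0,\ldots,0)$ and $S$ to itself; since such a map descends to a projectivity of the quotient, this reduces every finite centre to the case $a=0$, where $\pi(Q_t)=(t,\ldots,t^{k-1})\sim(1,t,\ldots,t^{k-2})$ is immediate.
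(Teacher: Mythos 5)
Your proof is correct, and it takes a genuinely different route from the paper's. The paper exploits the symmetry of the curve: it invokes the fact that every fractional linear substitution $t \mapsto (at+b)/(ct+d)$, $ad \neq bc$, lifts to an invertible linear map of $\mathrm{V}_k({\mathbb F}_q)$ that fixes the normal rational curve set-wise, and uses one such map to move the centre of projection to $(0,\ldots,0,1)$, after which projection is literally deletion of the last coordinate; this treats all centres uniformly in three lines, but leans on the (unverified there) fact that these substitutions really are induced by elements of $\mathrm{GL}_k({\mathbb F}_q)$. You instead compute the projection directly: for a finite centre $Q_a$ you choose an explicit surjection with kernel $\langle Q_a\rangle$, factor $t^i-a^i=(t-a)\sum_{j=0}^{i-1}t^ja^{i-1-j}$, and recognize the image points as the standard curve transformed by a unipotent lower-triangular matrix, with the centre $Q_\infty$ and the image of $Q_\infty$ handled by inspection. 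This costs you a case split and a longer computation, but it is completely self-contained and elementary --- every map is exhibited and its kernel, surjectivity, and invertibility are checkable by eye --- and your observation that exactly one point of the target curve (parameter $t=a$) is missed is a nice sharpening that the paper's phrasing ``contained in'' leaves implicit. Note also that your closing alternative (lifting the translation $t \mapsto t-a$ to reduce every finite centre to $a=0$) is essentially the paper's argument restricted to the translation subgroup of $\mathrm{PGL}_2({\mathbb F}_q)$, so you have in effect found both proofs; the paper's full fractional-linear version just removes the need to treat $Q_\infty$ separately.
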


\begin{proof}
Suppose we wish to project 
$$
S=\{ (1,t,\ldots,t^{k-1}) \ | \ t \in {\mathbb F}_q \} \cup \{ (0,\ldots,0,1) \},
$$
from the point
$$
x=(1,s,s^2,\ldots,s^{k-1}).
$$
There is a change of basis matrix of $\mathrm{V}_k({\mathbb F}_q)$ that maps
$$
(1,t,t^2,\ldots,t^{k-1})  \mapsto ((ct+d)^{k-1},(ct+d)^{k-2}(at+b),\ldots,(ct+d)(at+b)^{k-2},(at+b)^{k-1}), 
$$
where $ad \neq bc$. Hence, we can find a change of basis that
changes the coordinates of $x$ to $(0,\ldots,0,1)$ and fixes $S$ set-wise. Projecting from $x$ is equivalent to deleting the last coordinate, so the projection of $S$ is contained in a normal rational curve of $\mathrm{PG}_{k-2}({\mathbb F}_q)$.

\end{proof}

\begin{lemma} \label{projecttoplane}
The projection of a normal rational curve of $\mathrm{PG}_{k-1}({\mathbb F}_q)$ from any $k-3$ vectors of the normal rational curve is contained in a conic of $\mathrm{PG}_{2}({\mathbb F}_q)$.
\end{lemma}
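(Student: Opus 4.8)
The plan is to iterate Lemma~\ref{project}. That lemma says that projecting a normal rational curve from a single one of its points yields a set contained in a normal rational curve of one dimension lower. Since a conic is precisely a normal rational curve in $\mathrm{PG}_2({\mathbb F}_q)$ (the case $k=3$), the natural strategy is to project from the $k-3$ given points one at a time, descending from $\mathrm{PG}_{k-1}$ down to $\mathrm{PG}_2$, and invoke Lemma~\ref{project} at each step.

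More precisely, I would argue by induction on $k$. The base case is $k=3$, where projecting from zero points leaves the curve itself, which already lies in a conic of $\mathrm{PG}_2({\mathbb F}_q)$, so the statement is trivially true. For the inductive step, suppose the result holds in dimension $k-2$. Given a normal rational curve $S$ in $\mathrm{PG}_{k-1}({\mathbb F}_q)$ and $k-3$ of its points $x_1,\ldots,x_{k-3}$, I would first project from $x_1$. By Lemma~\ref{project}, the image $S'$ of $S$ under this projection is contained in a normal rational curve $C$ of $\mathrm{PG}_{k-2}({\mathbb F}_q)$. The images of the remaining points $x_2,\ldots,x_{k-3}$ are $k-4=(k-1)-3$ points of $C$, so by the induction hypothesis, projecting $C$ (and hence $S'\subseteq C$) from these $k-4$ points gives a set contained in a conic of $\mathrm{PG}_2({\mathbb F}_q)$. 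Since projecting from $x_1,\ldots,x_{k-3}$ in $\mathrm{PG}_{k-1}$ agrees with first projecting from $x_1$ and then projecting the image from the images of $x_2,\ldots,x_{k-3}$, the conclusion follows.

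The main point requiring care is the compatibility of successive projections: I must justify that projection from a set of points can be carried out one point at a time, i.e. that the projection of $S$ from $\{x_1,\ldots,x_{k-3}\}$ equals the projection from $\{\overline{x_2},\ldots,\overline{x_{k-3}}\}$ of the projection from $x_1$, where the bars denote images. This is the familiar fact that projection from a subspace factors through projection from a point of that subspace, but here I also need the images $\overline{x_2},\ldots,\overline{x_{k-3}}$ to be \emph{genuine points} of the curve $C$ rather than collapsing to the centre of projection. This is guaranteed because $x_1,\ldots,x_{k-3}$ are distinct points of the arc $S$, so no $x_i$ lies on the line through $x_1$ that is contracted, and each $\overline{x_i}$ is a distinct point of $C$ lying in the image set $S'$. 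With that observation the induction goes through cleanly, and the only genuinely nontrivial ingredient is Lemma~\ref{project} itself, which has already been established.
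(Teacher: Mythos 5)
Your proof is correct and takes essentially the same approach as the paper: the paper's proof also iterates Lemma~\ref{project}, projecting from $x_1$, then from the image of $x_2$, and so on down to $\mathrm{PG}_2({\mathbb F}_q)$, concluding with ``continuing in this way.'' Your version merely formalizes this iteration as an induction on $k$ and makes explicit the compatibility of successive projections and the distinctness of the projected points, details the paper leaves implicit.
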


\begin{proof}
Let $D=\{ x_1,\ldots,x_{k-3} \}$ be $k-3$ vectors of the normal rational curve. 
By Lemma~\ref{project}, the projection of a normal rational curve of $\mathrm{PG}_{k-1}({\mathbb F}_q)$ from $x_1$ is contained in a normal rational curve of $\mathrm{PG}_{k-2}({\mathbb F}_q)$. Projecting this projection from the projection of $x_2$ we obtain a set contained in a normal rational curve of $\mathrm{PG}_{k-3}({\mathbb F}_q)$. Continuing in this way we see that the projection from $D$ of the normal rational curve is contained in a normal rational curve (i.e. a conic) of $\mathrm{PG}_{2}({\mathbb F}_q)$.
\end{proof}

The aim of this article is to prove the following theorem.

\begin{theorem} \label{main}
Suppose $q$ is odd and $G$ is an arc of $\mathrm{PG}_{k-1}({\mathbb F}_q)$ of size $3k-6$. Suppose that $E$ is a subset of $G$ of size $2k-3$ and that $G$ projects to a subset of a conic from every $(k-3)$-subset of $E$.  Then $G$ cannot be extended to an arc of size $q+2$.
\end{theorem}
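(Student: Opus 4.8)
The plan is to argue by contradiction: suppose $G$ extends to an arc $S$ of $\mathrm{PG}_{k-1}(\mathbb{F}_q)$ of size $q+2$, and derive an impossibility from the conic hypothesis together with $q$ being odd. The first step is to record the tangent structure of a maximal-size arc. Through any $(k-2)$-subset $A$ of $S$ the hyperplanes form a pencil of $q+1$ members; the remaining $q+2-(k-2)=q-k+4$ points of $S$ lie on pairwise distinct members (otherwise $k$ points of $S$ would lie in a hyperplane, contradicting the arc property), so exactly $k-3$ hyperplanes of the pencil are \emph{tangent}, meeting $S$ only in $A$. I would fix a coordinate frame among points of $S$ and, for each such $A$, record the product $\tau_A$ of the $k-3$ linear forms cutting out these tangent hyperplanes; this is a form of degree $k-3$.

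Next I would pass to the plane using the hypothesis. For a $(k-3)$-subset $D$ of $E$ and a point $a\in E\setminus D$, projection from $\langle D\rangle$ sends $S$ to an arc of $\mathrm{PG}_2(\mathbb{F}_q)$, sends the tangent hyperplanes through $A=D\cup\{a\}$ to the $k-3$ lines through the image of $a$ that are tangent to the projected arc there, and --- by the hypothesis, compare Lemma~\ref{projecttoplane} --- sends the $2k-3$ images of $G\setminus D$ into a conic $C_D$. Since $q$ is odd, $C_D$ has a unique tangent at each of its points, given explicitly by the quadratic defining $C_D$, and since $2k-3\geq 5$ the conic $C_D$ is itself uniquely determined by these images. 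The engine of the argument is Segre's lemma of tangents applied to the projected arc: choosing three of the images of $E\setminus D$ as a reference triangle, the product of the $\tau_A$-data along the triangle is forced to equal a definite nonzero constant of $\mathbb{F}_q^\times$ determined by the geometry, whose value for a conic in odd characteristic is the classical one.

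The heart of the argument is then to show that the degree-$(k-3)$ tangent products $\tau_A$ are governed by the conic $C_D$ rather than by the (larger) projected arc, so that the products occurring in the various instances of Segre's lemma become mutually coherent. Running the lemma over the conics obtained from all $(k-3)$-subsets of $E$, and using this coherence, assembles the local constants into a single multiplicative identity in $\mathbb{F}_q^\times$. The ``conic'' side of this identity and the ``Segre sign'' side are compatible only in characteristic two; since $q$ is odd this is a contradiction, so $S$ cannot exist. This is the point where the parity of $q$ is indispensable, exactly mirroring the fact that in even characteristic the corresponding extensions (hyperovals, and extensions of the normal rational curve) do occur.

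The step I expect to be the main obstacle is establishing the coherence in the third paragraph. A priori the projected arc is strictly larger than the $2k-3$ conic points coming from $G$: a line tangent to the projected arc at the image of $a$ need only avoid those $2k-3$ images, and so it may be a secant of $C_D$ through a point of $C_D$ lying outside the image of $G$, rather than the unique conic tangent. Pinning down $\tau_A$ therefore cannot be done one projection at a time; it requires combining the conic conditions from \emph{all} $(k-3)$-subsets of $E$ simultaneously, together with a degree/interpolation argument on the forms $\tau_A$ against the conic points. It is precisely here that the hypotheses $|E|=2k-3$, $|G|=3k-6$ and $q$ odd are used in concert, and once this rigidity is in place the concluding sign computation is direct.
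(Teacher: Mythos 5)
There is a genuine gap, and you have named it yourself: the ``coherence'' step of your third paragraph is not a step of a proof but a restatement of the whole difficulty. Your first two paragraphs are sound (an arc $S$ of size $q+2$ has exactly $k-3$ tangent hyperplanes through each $(k-2)$-subset $A$ of $S$, and projection from a $(k-3)$-subset $D$ of $E$ sends $G\setminus D$ into a conic $C_D$), but everything then hinges on transferring Segre's lemma of tangents from the projected arc to $C_D$. As you concede in your final paragraph, a hyperplane tangent to $S$ at $A=D\cup\{a\}$ projects to a line that merely avoids the projected $(q-k+5)$-arc; it can perfectly well be a secant of $C_D$, since $C_D$ and the projected arc are only known to share the $2k-3$ images of $G\setminus D$. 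You offer no mechanism for the claimed rigidity --- ``combining the conic conditions from all $(k-3)$-subsets of $E$ simultaneously, together with a degree/interpolation argument'' is a hope, not an argument --- and the concluding contradiction (``compatible only in characteristic two'') is asserted without ever exhibiting the multiplicative identity, so there is nothing to check. It is not at all clear that an identity of Segre type, by itself, distinguishes odd from even characteristic in this situation.

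For comparison, the paper never attempts to locate the tangent hyperplanes of $S$ geometrically, precisely because of the obstruction above. The lemma-of-tangents machinery enters only through the linear relations of Lemma~\ref{theeqn} satisfied by the coefficients $\alpha_C$, and the hypothesis that $q$ is odd is spent at exactly one point: deriving the further relations of Lemma~\ref{theeqnodd} (where terms with $e\notin C$ appear twice and one divides by $2$), not in any final sign computation. The conic hypothesis is spent in Lemma~\ref{matrixmd}: five points of the quotient plane determine the conic, which forces every $3\times 3$ minor of the matrix $\mathrm{M}_D$ to vanish, whence $\mathrm{M}_D$ has rank two and the polynomials $\psi_D$ of Lemma~\ref{thepsis} exist. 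The contradiction is then purely linear-algebraic: with $n=t=k-3$ the square matrix $\mathrm{Q}_{k-3}$ has full rank (Lemmas~\ref{projpsi} and~\ref{woneQ}), so there is a weight-one vector in the column space of $\mathrm{P}_{k-3}$ (Lemma~\ref{weightoneQ}), contradicting Lemma~\ref{nowone}, which shows that the relations of Lemma~\ref{theeqnodd} forbid such a vector. If you want to rescue your outline, the lesson of the paper is that the tangent data should be encoded algebraically, as coefficients satisfying a system of identities, rather than pinned down line by line in each projection, which is exactly the step your proposal cannot carry out.
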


The following theorem follows immediately from Lemma~\ref{projecttoplane} and Theorem~\ref{main}.

\begin{theorem} \label{nrcmain}
If $q$ is odd then a subset of $3k-6$ points on a normal rational curve of $\mathrm{PG}_{k-1}({\mathbb F}_q)$ cannot be extended to an arc of size $q+2$.
\end{theorem}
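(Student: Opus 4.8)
The plan is to argue by contradiction: suppose that $G$ is contained in an arc $S$ of $\mathrm{PG}_{k-1}(\mathbb{F}_q)$ of size $q+2$. The engine of the proof will be Segre's lemma of tangents and its projective generalisation. I would begin with the basic counting observation that underlies everything: if $A$ is a $(k-2)$-subset of an arc of size $n$, then the $q+1$ hyperplanes through $\langle A\rangle$ each contain at most one further point of the arc (else $A$ together with two such points would be $k$ dependent points), so exactly $q+k-1-n$ of these hyperplanes are tangent, meeting the arc only in $A$. For $n=q+2$ this gives precisely $k-3$ tangent hyperplanes through every $(k-2)$-subset of $S$, a number I would keep track of throughout.

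Next I would pass to the plane, exactly as in Lemma~\ref{projecttoplane}. For each $(k-3)$-subset $D$ of $E$, project $S$ from $\langle D\rangle$ onto a plane $\pi_D\cong\mathrm{PG}_2(\mathbb{F}_q)$. Since $S$ is an arc, the images of the points of $S\setminus D$ form an arc $\overline{S}_D$ of size $q-k+5$, and each of its points carries exactly $k-3$ tangent lines, these being the images of the $k-3$ tangent hyperplanes of $S$ through $\langle D\rangle$ extended by that point. The hypothesis enters here: by assumption the images of the $2k-3$ points of $G\setminus D$ lie on a conic $\mathcal{C}_D\subset\pi_D$. On a conic the tangent structure is completely understood, since through each of its points passes a unique conic tangent, so the lemma of tangents can be evaluated explicitly at these $2k-3$ marked points.

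The heart of the argument is Segre's lemma of tangents applied in each $\pi_D$. For a triangle $a_1,a_2,a_3$ of $\overline{S}_D$, writing $f_i$ for the product of the $k-3$ linear forms defining the tangents at $a_i$, suitably normalised, the lemma yields a multiplicative identity of the form $f_1(a_2)f_2(a_3)f_3(a_1)=\varepsilon\, f_1(a_3)f_2(a_1)f_3(a_2)$ with $\varepsilon=\pm1$ a sign determined by $q$ and $k$. I would apply this to triangles whose three vertices lie in $\overline{G\setminus D}$, hence on $\mathcal{C}_D$, so that one tangent line at each vertex is the conic tangent, whose contribution to $f_i$ can be written down explicitly in the conic parameter. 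Running this over all such triangles, over all $(k-3)$-subsets $D$ of $E$, and over the $2k-3$ marked points, the explicit conic contributions should telescope and cancel, while the accumulated signs multiply to $-1$. Since $q$ is odd this contradicts the cancelled product being $+1$; the identical computation gives $-1=+1$ with no contradiction when $q$ is even, which is precisely why the theorem requires odd characteristic.

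The main obstacle I foresee is the bookkeeping that makes the cancellation work, and I would expect three delicate points. First, one must identify the $k-3$ tangent hyperplanes of $S$ through a given $(k-2)$-subset consistently across the different projection centres $D\subseteq E$, so that the factors appearing in the various planar identities can be matched and multiplied coherently; this is where the exact sizes $3k-6$ and $2k-3$, together with the coincidence $t=k-3=|D|$, must be exploited. Second, the $q-3k+8$ images of the points of $S\setminus G$ lie off each conic $\mathcal{C}_D$ and may occupy conic tangents, so I would need to show that these extra contributions either cancel in pairs or are absorbed by the same sign mechanism. Third, pinning down the exact sign $\varepsilon$ in the generalised lemma of tangents, and verifying that the remaining non-sign factors genuinely form a square and hence cancel, is the step on which the entire contradiction rests.
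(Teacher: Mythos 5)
Your proposal is a plan, not a proof: the step on which everything depends---``the explicit conic contributions should telescope and cancel, while the accumulated signs multiply to $-1$''---is asserted, never carried out, and it is precisely where the entire content of the theorem lies. Worse, one concrete claim feeding into it is unjustified: you take the tangent line of the conic $\mathcal{C}_D$ at a marked point to be one of the $k-3$ tangent lines of the planar arc $\overline{S}_D$, but tangency to the conic and tangency to the arc are different conditions. The arc $\overline{S}_D$ contains $q-3k+8$ points off the conic (the images of $S\setminus G$), and the conic tangent at a marked point may well pass through one of them, in which case it is a secant of the arc and contributes nothing to $f_i$. You flag this yourself as your ``second delicate point,'' and you likewise leave the sign $\varepsilon$ undetermined (your ``third point''), but flagging an obstacle is not the same as overcoming it; with these gaps open there is no contradiction on the table.

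It is worth seeing how the paper evades exactly these obstacles. Its starting point is the same as yours---the coefficients $\alpha_C$ in Lemma~\ref{theeqn} are the coordinate-free descendants of Segre's lemma of tangents---but it then works additively and linear-algebraically rather than multiplicatively. Oddness of $q$ is used once, to combine the equations of Lemma~\ref{theeqn} into those of Lemma~\ref{theeqnodd}; the conic hypothesis enters not through conic tangent lines but as a rank statement: in Lemma~\ref{matrixmd} any conic through five of the projected points of $G$ must be the conic $f_D$ itself, forcing every $3\times 3$ minor of $\mathrm{M}_D$ to vanish. This produces the polynomials $\psi_D$ of Lemma~\ref{thepsis}, an interpolation argument (Lemma~\ref{projpsi}) shows the square matrix $\mathrm{Q}_{k-3}$ is nonsingular (Lemma~\ref{woneQ}), and the contradiction is that a weight-one vector then lies in the column space of $\mathrm{P}_{k-3}$, which Lemma~\ref{nowone} forbids. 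Note in particular that the points of $S\setminus G$ are never located relative to any conic: they enter only through the coefficients $\alpha_C$ and the equations they satisfy. That is precisely the mechanism your multiplicative, triangle-by-triangle approach lacks, and without a substitute for it your argument cannot be completed as sketched.
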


Stated in terms of error-correcting codes, Theorem~\ref{nrcmain} says the following.

\begin{theorem}
A $k$-dimensional linear maximum distance separable code of length $3k-6$ over a field ${\mathbb F}_q$ of odd characteristic, which can be extended to a Reed-Solomon code of length $q+1$, cannot be extended to a linear maximum distance separable code of length $q+2$. 
\end{theorem}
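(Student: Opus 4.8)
The final statement is the coding-theoretic shadow of Theorem~\ref{nrcmain}. Under the standard dictionary between $k$-dimensional linear MDS codes and arcs of $\mathrm{PG}_{k-1}(\mathbb{F}_q)$ --- the columns of a generator matrix form an arc, adjoining a coordinate to the code corresponds to adjoining a point to the arc, and Reed--Solomon codes correspond to normal rational curves --- this statement is word for word Theorem~\ref{nrcmain}. That theorem in turn reduces to Theorem~\ref{main}: a $(3k-6)$-subset $G$ of a normal rational curve is an arc, and by Lemma~\ref{projecttoplane} it projects into a conic from every $(k-3)$-subset of the curve, in particular from every $(k-3)$-subset of any fixed $(2k-3)$-subset $E \subseteq G$. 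So all the content lies in Theorem~\ref{main}, and that is what I would prove.

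I would argue by contradiction, assuming $G$ extends to an arc $\tilde G$ of size $q+2$, and the main tool would be Segre's lemma of tangents together with its higher-dimensional generalisation. The basic invariant is the number of tangent hyperplanes through the $(k-3)$-space spanned by $k-2$ points of the arc; for a putative $(q+2)$-arc this equals $(q+1)-\big((q+2)-(k-2)\big)=k-3$. For each $(k-3)$-subset $D$ of $E$ I would project $\tilde G$ from the $(k-4)$-space $\langle D\rangle$ onto a plane $\mathrm{PG}_2(\mathbb{F}_q)$, where by hypothesis the image of $G$ lies on a conic $C$. In this plane the tangent hyperplanes of $\tilde G$ through $\langle D\rangle$ become tangent lines of $C$, whose behaviour is completely explicit; applying Segre's lemma of tangents to a triangle of points of $C$ produces a relation whose right-hand side is $-1$, the $-1$ coming from Wilson's theorem $\prod_{a\in\mathbb{F}_q^{\times}}a=-1$ and from the oddness of $q$.

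The heart of the argument would then be to assemble these planar relations into a single global identity using the generalised lemma of tangents: chaining the $k\times k$ determinants $|D\,a\,c|$ around cycles of arc points makes the determinants cancel, leaving a product of tangent-form values forced to equal a fixed power of $-1$ dictated by $t=k-3$ and the incidences within $E$. The existence of the extension $\tilde G\supseteq G$ meanwhile forces the very same product to lie in the other square class of $\mathbb{F}_q^{\times}$, because a genuine extension fills the tangent directions consistently across all the projections. The sizes $3k-6$ and $2k-3$ are calibrated precisely so that the projections from the $(k-3)$-subsets of $E$ furnish enough relations to pin down every tangent product at once. Comparing the two evaluations forces a single element of $\mathbb{F}_q^{\times}$ to be simultaneously a square and a non-square, which is impossible when $q$ is odd.

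The main obstacle is precisely this assembly step: controlling all the signs and determinant cancellations across the $\binom{2k-3}{k-3}$ projections, and checking that the conic hypothesis on $E$ determines the tangent forms of the whole arc $\tilde G$, not merely of $G$, in each plane. The odd-characteristic hypothesis is essential rather than cosmetic: in even characteristic the tangent lines of a conic all pass through its nucleus, Segre's $-1$ degenerates to $+1$, the square-class distinction collapses, and the obstruction evaporates --- consistent with the theorem being stated only for $q$ odd.
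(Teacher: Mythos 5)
Your reduction of the coding statement is exactly the paper's: the dictionary between linear MDS codes and arcs turns it into Theorem~\ref{nrcmain}, and Lemma~\ref{projecttoplane} supplies the conic-projection hypothesis so that everything rests on Theorem~\ref{main}. From there on, however, your sketch has a genuine gap, and it is the whole proof rather than a detail. Your argument hinges on two assertions: (i) that the planar Segre relations from the $\binom{2k-3}{k-3}$ projections can be chained into a single identity forcing a product of tangent-form values to equal a fixed power of $-1$, and (ii) that the existence of the extension $\tilde G$ forces that same product into the other square class of $\mathbb{F}_q^{\times}$. You offer no mechanism for (ii) at all --- nothing in the lemma of tangents attaches a square-class obstruction to extendability in dimension $k>3$ --- and (i) is precisely the step you yourself concede is the ``main obstacle.'' The square/non-square dichotomy does conclude Segre-type arguments in the plane (a conic plus a point is not an arc for $q$ odd because the tangent lines form a dual conic), but neither the paper nor any argument I know globalizes it in the way you describe; asserting that the sizes $3k-6$ and $2k-3$ are ``calibrated precisely'' to make it work is numerology, not proof.

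The paper's actual mechanism, while sharing the tangent-lemma ancestry (the coefficients $\alpha_C$ in Lemma~\ref{theeqn} come from that machinery), is linear-algebraic rather than arithmetic. From Lemma~\ref{theeqn} it derives, for $q$ odd, the equations of Lemma~\ref{theeqnodd} (this is the only place oddness is used: certain terms appear twice and one divides by $2$); these equations show that the column space of a matrix $\mathrm{P}_n$ built from the arc contains no vector of weight one (Lemma~\ref{nowone}). The conic hypothesis is then used not to produce $-1$'s but to force rank degeneration: each submatrix $\mathrm{M}_D$ has rank at most $t$ (Lemma~\ref{matrixmd}), yielding nonzero polynomials $\psi_D$ vanishing at prescribed points (Lemmas~\ref{thepsis} and~\ref{projpsi}); with $t=k-3$ the matrix $\mathrm{Q}_t$ is square --- this is what the sizes $3k-6$ and $2k-3$ are calibrated for --- and the $\psi_D$'s force it to be nonsingular (Lemma~\ref{woneQ}), which via Lemma~\ref{weightoneQ} puts a weight-one vector in the column space of $\mathrm{P}_{k-3}$, contradicting Lemma~\ref{nowone}. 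So the contradiction is a rank statement over $\mathbb{F}_q$, not a clash of square classes, and to complete your proposal you would need to either supply the missing mechanism for (i) and (ii) or switch to an argument of this kind.
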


Note that some authors refer to the Reed-Solomon code as the cyclic code of length $q-1$, obtained from the normal rational curve by deleting the columns $(1,0,\ldots,0)$ and $(0,\ldots,0,1)$ in the generator matrix, and the code of length $q+1$ as the doubly-extended Reed-Solomon code.

\section{A set of equations associated with an arc}

Let $\det(v_1,\ldots,v_k)$ denote the determinant of the matrix whose $i$-th row is $v_i$, a vector of ${\mathrm V}_k({\mathbb F}_q)$. If $C=\{p_1,\ldots,p_{k-1} \}$ is an ordered set of $k-1$ vectors then we write
$$
\det(u,C)=\det(u,p_1,\ldots,p_{k-1}),
$$
where we evaluate the determinant with respect to a fixed canonical basis. 

Throughout $S$ will be an arbitrarily ordered arc of size $q+k-1-t$ of $\mathrm{V}_k({\mathbb F}_q)$.

Let $C$ be subset of $S$ of size $k-1$. There is a non-zero element $\alpha_C \in {\mathbb F}_q$, dependent on the first $k-2$ elements of $S$ and the ordering of $S$, such that the following lemma holds, see \cite[Lemma 7.20]{Ball2015} or \cite[Lemma 17]{Ball2016a}.

\begin{lemma} \label{theeqn}
Let $E$ be a subset of $S$ of size $k+t$. For any subset $A$ of $E$ of size $k-2$,
$$
\sum \alpha_C \prod_{z \in E \setminus C} \det(z,C)^{-1}=0,
$$
where the sum runs over the subsets $C$ of $E$ of size $k-1$ containing $A$.
\end{lemma}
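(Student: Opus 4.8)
The plan is to collapse the identity to a rank-two statement by projecting the arc $S$ from the $(k-2)$-subset $A$, and then to recognise what remains as a classical partial-fractions identity. Since the $k-2$ vectors of $A$ are linearly independent, the quotient $\mathrm{V}_k(\mathbb{F}_q)/\langle A\rangle$ is two-dimensional; and because any $k$ vectors of $S$ are independent, no element of $S\setminus A$ lies in $\langle A\rangle$ and distinct elements of $E\setminus A$ have distinct images in the projective line $\mathrm{PG}_1(\mathbb{F}_q)$. The first thing I would record is that, for $A\subseteq C$, the determinant $\det(z,C)$ depends only on the images $\bar z,\bar y$ of $z$ and of the single vector $y\in C\setminus A$: contracting the standard volume form against the vectors of $A$ produces an area form $[\,\cdot\,,\,\cdot\,]$ on the quotient plane, and $\det(z,C)=\kappa\,[\bar z,\bar y]$ for a non-zero constant $\kappa$ depending only on $A$ and the fixed ordering.

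Next I would pick an affine coordinate on the line so that $\bar z=\lambda_z(1,s_z)$ with $\lambda_z\neq 0$ and the $s_z\in\mathbb{F}_q$ pairwise distinct, whence $[\bar z,\bar y]=\lambda_z\lambda_y(s_y-s_z)$. Every $(k-1)$-subset $C$ with $A\subseteq C\subseteq E$ is of the form $C=A\cup\{y\}$ with $y\in E\setminus A$, and then $E\setminus C=(E\setminus A)\setminus\{y\}$ has exactly $t+1$ elements. Substituting the formula for the determinants, each summand of the lemma becomes, up to one global non-zero factor independent of $y$,
$$
\frac{\alpha_{A\cup\{y\}}}{\lambda_y^{\,t}}\cdot\frac{1}{\prod_{z\in (E\setminus A)\setminus\{y\}}(s_y-s_z)}.
$$
Thus, writing $f(X)=\prod_{z\in E\setminus A}(X-s_z)$ and $h(s_y)=\alpha_{A\cup\{y\}}/\lambda_y^{\,t}$, the full sum is a non-zero multiple of $\sum_{y\in E\setminus A} h(s_y)/f'(s_y)$.

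At this point I would invoke the partial-fractions identity: for the $t+2$ distinct values $s_z$ with $z\in E\setminus A$, one has $\sum_{y} h(s_y)/f'(s_y)=0$ as soon as $h$ is the restriction of a polynomial of degree at most $t$, as follows by multiplying the partial-fraction expansion of $h(X)/f(X)$ by $X$ and letting $X\to\infty$. So the lemma is reduced to showing that the quantities $\alpha_{A\cup\{y\}}/\lambda_y^{\,t}$ interpolate a polynomial in $s_y$ of degree at most $t$, and this is exactly the information carried by the definition of $\alpha_C$. Indeed $\alpha_C$ is built from the Segre tangent product $\prod_{z\in S\setminus C}\det(z,C)$ of the determinants taken over the whole of $S$; evaluating this product in the coordinate above and using the identity $\prod_{a\in\mathbb{F}_q^{*}}a=-1$, one finds that it is, up to a power of $\lambda_y$, the reciprocal of a polynomial in $s_y$ whose roots are precisely the $t$ points of $\mathrm{PG}_1(\mathbb{F}_q)$ not met by the projection of $S$, and so of degree exactly $t$.

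The partial-fractions identity is the routine ingredient; I expect the genuine obstacle to be the bookkeeping of the last step. One has to follow the scaling factors $\lambda_z$ through the tangent product carefully and confirm that the definition of $\alpha_C$ removes all dependence on the choice of representatives, so that $\alpha_{A\cup\{y\}}/\lambda_y^{\,t}$ is a bona fide polynomial of degree at most $t$ in the single variable $s_y$. The numerical point that makes everything fit is that $|E\setminus A|=t+2$ while exactly $t$ points of the line are uncovered by $S$: this is just the margin required for the degree-at-most-$t$ hypothesis of the partial-fractions identity to be met.
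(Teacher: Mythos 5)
A structural point first: the paper itself contains no proof of this lemma. It is imported from \cite[Lemma 7.20]{Ball2015} and \cite[Lemma 17]{Ball2016a}, where the $\alpha_C$ are built out of Segre's tangent functions $T_A$ (the product of the $t$ linear forms vanishing on the tangent hyperplanes through $\langle A\rangle$), and the consistency of that construction over the different $(k-2)$-subsets $A$ of a fixed $C$ is forced by iterating the lemma of tangents; that is where the advertised dependence on the ordering and on the first $k-2$ elements of $S$ originates. Your argument shares the two pillars of that proof --- projection from $A$ to a line followed by the partial-fractions identity for polynomials of degree at most $t$ at $t+2$ distinct points, and Segre's computation via $\prod_{a\in\mathbb{F}_q^{*}}a=-1$ relating a determinant product over the whole arc to the tangent polynomial --- but it differs in one genuine and simplifying respect: you take $\alpha_C$ to be $\bigl(\prod_{z\in S\setminus C}\det(z,C)\bigr)^{-1}$ itself (note you must say this explicitly, since the paper never defines $\alpha_C$; the lemma is an existence statement, and non-vanishing of your $\alpha_C$ is immediate because $S$ is an arc). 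With this choice the coherence problem that the lemma of tangents solves in the cited proof does not arise: your $\alpha_C$ visibly depends only on $C$ and the ordering, so the same constants automatically serve every admissible pair $(E,A)$, which is the real content of the lemma. What you lose is only the explicit tangent-function form of $\alpha_C$ that Ball's more general machinery uses elsewhere; what you gain is a self-contained argument.

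The bookkeeping you defer is indeed the crux, and you should know it closes, for two slightly delicate reasons. (i) Signs: each summand $\alpha_C\prod_{z\in E\setminus C}\det(z,C)^{-1}$ contains exactly $(q-t)+(t+1)=q+1$ determinant factors against $C$, so reordering $C$ multiplies the summand by $(\pm 1)^{q+1}=1$ when $q$ is odd (and signs are vacuous in characteristic $2$); hence you may compute each summand with $y$ placed last, i.e.\ purely in terms of the area form on $\mathrm{V}_k({\mathbb F}_q)/\langle A\rangle$, and your displayed formula then holds literally, not just up to a $y$-dependent sign. (ii) Scaling: in the Segre product the surviving exponent of $\lambda_y$ is $q-t-1$, so
$$
\alpha_{A\cup\{y\}}=\mathrm{const}_A\cdot\lambda_y^{\,t+1-q}\,T(s_y)=\mathrm{const}_A\cdot\lambda_y^{\,t}\,T(s_y),
$$
using $\lambda_y^{\,q-1}=1$, where $T$ is the monic polynomial whose roots are the uncovered affine directions; thus $h(s_y)=\alpha_{A\cup\{y\}}/\lambda_y^{\,t}=\mathrm{const}_A\,T(s_y)$. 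Its degree is $t$ if the point at infinity of your chart is covered by the projection of $S\setminus A$, and $t-1$ if infinity is itself a tangent direction --- exactly one of the two occurs, and you need this small case split, since the line has only $q$ affine points. In either case $\deg T\leqslant t$, which is what the partial-fractions identity requires. With these verifications written out, your proof is complete and, if anything, cleaner than the one the paper cites.
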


In this article we will use the following set of equations, which are deduced from the equations in Lemma~\ref{theeqn}.

\begin{lemma} \label{theeqnodd}
Suppose that $q$ is odd. Let $E$ be a subset of $S$ of size $k+t-1$ and let $e \in S \setminus E$. For any subset $D$ of $E$ of size $k-3$,
$$
\sum \alpha_C \prod_{z \in (E \cup \{ e \}) \setminus C} \det(z,C)^{-1}=0,
$$
where the sum runs over the subsets $C$ of $E$ of size $k-1$ containing $D$.
\end{lemma}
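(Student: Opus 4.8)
The plan is to obtain these equations as linear combinations of the equations furnished by Lemma~\ref{theeqn}, applied to the enlarged set $E\cup\{e\}$. Since $|E|=k+t-1$ and $e\notin E$, the set $E':=E\cup\{e\}$ is a subset of $S$ of size $k+t$, which is exactly the hypothesis of Lemma~\ref{theeqn}. Write $V:=E'\setminus D$; then $|V|=(k+t)-(k-3)=t+3$ and $e\in V$, and I set $F:=E\setminus D=V\setminus\{e\}$, so that $|F|=t+2$. For a pair $\{x,y\}\subseteq V$ I abbreviate $C:=D\cup\{x,y\}$, a $(k-1)$-subset of $E'$, and
$$
U_{\{x,y\}}:=\alpha_C\prod_{z\in V\setminus\{x,y\}}\det(z,C)^{-1}.
$$
Since $E'\setminus C=V\setminus\{x,y\}$, this $U_{\{x,y\}}$ is precisely the summand occurring in both lemmas.

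For each $w\in V$ I would apply Lemma~\ref{theeqn} to $E'$ with the $(k-2)$-subset $A:=D\cup\{w\}$. The $(k-1)$-subsets $C$ of $E'$ containing $A$ are exactly the sets $D\cup\{w,x\}$ with $x\in V\setminus\{w\}$, so the lemma yields
$$
\mathcal{E}_w:\qquad \sum_{x\in V\setminus\{w\}}U_{\{w,x\}}=0.
$$
Because $\alpha_C$ and each $\det(z,C)$ are evaluated with $C$ carrying the ordering it inherits from $S$, both depend only on the set $C=D\cup\{w,x\}$; together with the symmetry of the index set $V\setminus\{w,x\}$ this shows that $U_{\{w,x\}}$ is a genuine function of the unordered pair $\{w,x\}$, so the equations $\mathcal{E}_w$ may be added term by term without sign ambiguity.

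I would then sum the equations $\mathcal{E}_w$ over $w\in F$ only. In the resulting double sum a pair $\{x,y\}\subseteq F$ is counted twice (once in $\mathcal{E}_x$ and once in $\mathcal{E}_y$), whereas a pair $\{w,e\}$ with $w\in F$ is counted once, giving
$$
0=\sum_{w\in F}\mathcal{E}_w=2\sum_{\{x,y\}\subseteq F}U_{\{x,y\}}+\sum_{w\in F}U_{\{e,w\}}.
$$
The final sum equals $\sum_{x\in V\setminus\{e\}}U_{\{e,x\}}$, which is the equation $\mathcal{E}_e$ and hence vanishes. Thus $2\sum_{\{x,y\}\subseteq F}U_{\{x,y\}}=0$, and since $q$ is odd I may divide by $2$ to obtain $\sum_{\{x,y\}\subseteq F}U_{\{x,y\}}=0$. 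As the $(k-1)$-subsets $C$ of $E$ containing $D$ are exactly the sets $D\cup\{x,y\}$ with $\{x,y\}\subseteq F$, and for these $(E\cup\{e\})\setminus C=V\setminus\{x,y\}$, this is precisely the asserted identity.

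The genuinely substantive step is spotting this linear combination: the target equation sums over subsets $C$ avoiding $e$ yet keeps $e$ inside the determinant products, and what makes everything fit is that the unwanted pairs through $e$ assemble into one further instance of Lemma~\ref{theeqn}, namely $\mathcal{E}_e$, which cancels them. The two remaining points — the symmetry of $U_{\{x,y\}}$ and the division by $2$ — are exactly where one must invoke the ordering convention on $C$ and the hypothesis that $q$ is odd, respectively, but neither presents any real difficulty once the combination above is in hand.
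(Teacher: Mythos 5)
Your proof is correct and is essentially the paper's own argument: the paper forms the combination $-\mathrm{eq}(D\cup\{e\})+\sum_{a\in E\setminus(D\cup\{e\})}\mathrm{eq}(D\cup\{a\})$, which is exactly your $\sum_{w\in F}\mathcal{E}_w$ followed by cancelling the cross terms via $\mathcal{E}_e=0$, and then divides by $2$ using that $q$ is odd. The only difference is presentational (you phrase the cancellation as invoking the vanishing of $\mathcal{E}_e$ rather than subtracting it), plus your additional, harmless remark on the well-definedness of the summands.
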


\begin{proof}
Let us call the equation in Lemma~\ref{theeqn}, $\mathrm{eq}(A)$. Then for any $D$ which is a subset of $E$ of size $k-3$, consider
$$
-\mathrm{eq}(D \cup \{ e \})+\sum_{a \in E \setminus (D \cup \{ e \})} \mathrm{eq}(D \cup \{ a \}).
$$
The terms for which $e \in C$ cancel and the terms for which $e \not\in C$ appear twice. Since $q$ is odd we obtain the equation stated in the lemma.
\end{proof}

Since $S$ is an arc, every $(k-1)$-subset $C=\{p_1,\ldots,p_{k-1}\}$ of $S$ spans a hyperplane. In the dual space this hyperplane is a vector. To explicitly work with this vector we set up a duality between $\mathrm{V}_k({\mathbb F}_q)$ and its dual space. Let
$$
x_j=(-1)^{j+1} \det(p_1,\ldots,p_{k-1}),
$$
where the $j$-th coordinate of $p_1,\ldots,p_{k-1}$ has been deleted.

This allows us to write
$$
\det(u,C)=u\cdot x,
$$
where $x=(x_1,\ldots,x_k)$ and $\cdot$ is the standard scalar product, which by abusing notation slightly, we will write as $u \cdot C$.

We will use the word {\em point} for a $1$-dimensional subspace of ${\mathrm V}_k({\mathbb F}_q)$ and {\em line} for a $2$-dimensional subspace of ${\mathrm V}_k({\mathbb F}_q)$ and likewise for the dual space. Thus, in the dual space, the subspace spanned by a set $C$ of $(k-1)$ vectors of $S$ is a point and the subspace spanned by a set $A$ of $k-2$ vectors of $S$ is a line. Again, we will abuse notation slightly and refer to $C$ as a point of the dual space and $A$ as a line of the dual space.

\begin{lemma} \label{notcrossbasis}
Suppose that $A$ and $U$ are disjoint subsets of an arc of ${\mathrm V}_k({\mathbb F}_q)$ of sizes $k-2$ and $n+1$ respectively. If
$$
\psi (X)= \sum_{w \in U} \lambda_w \prod_{u \in U \setminus \{ w\}} (u\cdot X),
$$
for some $\lambda_w \in {\mathbb F}_q$, is zero at $n+1$ points on the line $A$, then $\psi \equiv 0$.
\end{lemma}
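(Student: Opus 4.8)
The plan is to restrict $\psi$ to the line $A$ of the dual space, where it becomes a binary form of degree at most $n$, and then to use the hypothesis to force this restriction to vanish identically, after which the individual coefficients $\lambda_w$ can be recovered one at a time by evaluating $\psi$ at cleverly chosen points. The setup is as follows. Since $A$ is a $(k-2)$-subset of an arc, its vectors are linearly independent, the line $A$ of the dual space is the pencil of hyperplanes through $\langle A \rangle$, and for each $w \in U$ the $(k-1)$-set $C_w := A \cup \{ w \}$ is a genuine $(k-1)$-subset of the arc, hence a point of the dual space lying on the line $A$. For each $u \in U$ the linear form $X \mapsto (u \cdot X)$ is not identically zero on $A$, because $u \notin \langle A \rangle$ (as $A \cup \{ u \}$ is a $(k-1)$-subset of an arc), so it restricts to a nonzero linear form on the projective line $A$. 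Consequently each summand $\prod_{u \in U \setminus \{ w \}} (u \cdot X)$ restricts to a binary form of degree $n = |U| - 1$, and $\psi$ restricts to a binary form of degree at most $n$ on $A$.

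Next I would invoke the hypothesis together with the elementary bound on zeros of a binary form. Since the restriction of $\psi$ to $A$ vanishes at $n+1$ points of $A$, while a nonzero binary form of degree at most $n$ can vanish at no more than $n$ points of a projective line, the restriction of $\psi$ to $A$ must be identically zero. In particular $\psi(C_w) = 0$ for every $w \in U$.

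The decisive step is to read off the coefficients by evaluating at the distinguished points $C_w$. At $X = C_w$ every summand indexed by some $w' \neq w$ contains the factor $(w \cdot C_w) = \det(w, A \cup \{ w \}) = 0$ and therefore vanishes, so only the $w$-th summand survives and $\psi(C_w) = \lambda_w \prod_{u \in U \setminus \{ w \}} (u \cdot C_w)$. Each factor $(u \cdot C_w) = \det(u, A \cup \{ w \})$ is the determinant of $k$ distinct vectors of the arc and is hence nonzero, so the product is nonzero. From $\psi(C_w) = 0$ we therefore deduce $\lambda_w = 0$, and since this holds for every $w \in U$ all the coefficients vanish and $\psi \equiv 0$.

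I expect the only delicate points to be the two that bridge the restriction and the full statement: first, checking that passing to the line $A$ really does yield a binary form of degree at most $n$, that is, that no factor $(u \cdot X)$ collapses to a constant on $A$, which is where disjointness of $A$ and $U$ inside an arc is used; and second, passing from ``$\psi$ vanishes on all of $A$'' to the genuinely stronger ``$\psi \equiv 0$.'' The evaluation at the points $C_w$, where exactly one summand survives and the arc property guarantees the surviving product is nonzero, is precisely what makes this second bridge work.
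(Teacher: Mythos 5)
Your proposal is correct and follows essentially the same route as the paper's proof: restrict $\psi$ to the line $A$ to get a binary form of degree at most $n$, conclude from the $n+1$ zeros that the restriction vanishes identically on $A$, and then evaluate at each point $A \cup \{w\}$, where only the $w$-th summand survives and the arc property makes the surviving product of determinants nonzero, forcing $\lambda_w = 0$. The extra care you take (checking that no factor $(u \cdot X)$ degenerates on $A$) is a detail the paper leaves implicit, but the argument is identical in substance.
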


\begin{proof}
The polynomial $\psi$ is of degree $n$. By choosing a basis which includes the elements of $A$, we see that its restriction to the line $A$ is a homogeneous polynomial in two variables. If it is zero at $n+1$ distinct points then it is identically zero on the line $A$.

Let $w_0 \in U$. Then
$$
0=\psi(A \cup \{ w_0 \})=\lambda_{w_0} \prod_{u \in U \setminus \{ w_0\}} \det(u,A \cup \{ w_0\}),
$$ 
and so $\lambda_{w_0}=0$. Note that all the determinants are non-zero since $A \cup U$ is an arc.
\end{proof}

\section{Arcs in spaces of odd characteristic}

We will suppose from now on that $q$ is odd and $k\geqslant 5$. Note that Theorem~\ref{main} holds for $k=2$, $3$ and $4$ since there are no arcs of size $q+2$ in these spaces when $q$ is odd, see for example \cite{HT1991} or \cite{HS2001}. 

Let $n$ be a non-negative integer such that $n \leqslant |S|-k-t$.

Let $G$ be a subset of $S$ of size $k+t+n$, which will remain fixed and let $E$ be a subset of $G$ of size $k+t-1$, which will also remain fixed. Let $U=G \setminus E$.
Let $A$ be a subset of $E$ of size $k-2$ which for the most part will also remain fixed.

We define a matrix $\mathrm{P}_n$ whose rows are indexed by the $(k-1)$-subsets of $E$ which contain a $(k-3)$-subset of $A$. The columns of $\mathrm{P}_n$ are indexed by pairs $(D,w)$, where $D$ is a subset of $A$ of size $k-3$ and $w \in U$. The $(C,(D,w))$ entry of $\mathrm{P}_n$ is
$$
\prod_{u \in U \setminus \{ w \}} \det(u,C),
$$
if $C$ contains $D$ and zero otherwise.

\begin{lemma} \label{nowone}
There is no vector of weight one in the column space of $\mathrm{P}_n$.
\end{lemma}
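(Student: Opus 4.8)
The plan is to read the column space of $\mathrm{P}_n$ through the duality $\det(u,C)=u\cdot C$ set up above. Writing $|U|=|G|-|E|=n+1$, each column index $(D,w)$ corresponds to the degree-$n$ form $\Phi_w(X)=\prod_{u\in U\setminus\{w\}}(u\cdot X)$, restricted to the rows $C$ with $D\subseteq C$. Thus a general vector of the column space has $C$-entry $\sum_D\psi_D(C)$, where the sum runs over the $(k-3)$-subsets $D\subseteq A\cap C$ and each $\psi_D=\sum_{w\in U}\mu_{(D,w)}\Phi_w$ is an arbitrary polynomial of the shape treated in Lemma~\ref{notcrossbasis}, one independent such form per $(k-3)$-subset $D$ of $A$. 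Every row $C$ satisfies either $A\subseteq C$, in which case $C=A\cup\{p\}$ and its entry is $\sum_{a\in A}\psi_{A\setminus\{a\}}(C)$, or $|C\cap A|=k-3$, in which case $C\cap A=A\setminus\{a\}$ for a unique $a$ and the entry is the single term $\psi_{A\setminus\{a\}}(C)$.

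Suppose, for a contradiction, that some column-space vector $v$ is nonzero only in the row $C_0$, and split according to the position of $C_0$. Suppose first that $|C_0\cap A|=k-3$, say $C_0\cap A=D_0$ and $C_0=D_0\cup\{p_0,p_0'\}$. Choose $s\in E\setminus A$ with $s\notin\{p_0,p_0'\}$ and consider the rows $C\supseteq D_0\cup\{s\}$, that is, the points of the dual line $D_0\cup\{s\}$. These are the sets $D_0\cup\{s,q\}$ with $q\in(E\setminus A)\setminus\{s\}$; each meets $A$ in exactly $D_0$, none of them is $C_0$, so each carries the single entry $\psi_{D_0}$ and all $t$ of them vanish. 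Since $t\geq n+1$ in the relevant range (see the final paragraph), Lemma~\ref{notcrossbasis}, applied with the $(k-2)$-set $D_0\cup\{s\}$ and the set $U$, forces $\psi_{D_0}\equiv0$; but then $v_{C_0}=\psi_{D_0}(C_0)=0$, a contradiction.

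Suppose instead that $A\subseteq C_0$. Then every row $C$ with $|C\cap A|=k-3$ is different from $C_0$ and hence has entry $0$, so for each $a\in A$ the form $\psi_{A\setminus\{a\}}$ vanishes at all such rows. Fixing any $p\in E\setminus A$ and writing $D=A\setminus\{a\}$, the rows $D\cup\{p,q\}$ with $q\in(E\setminus A)\setminus\{p\}$ are the $t$ points of the dual line $D\cup\{p\}$ that meet $A$ in $D$, and all of them vanish. As $t\geq n+1$, Lemma~\ref{notcrossbasis} again gives $\psi_D\equiv0$, and since this holds for every $(k-3)$-subset $D$ of $A$ we obtain $v\equiv0$, contradicting $v_{C_0}\neq0$.

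The only real content is the incidence count: each dual line used must carry at least $n+1$ rows whose entry is governed by a single form $\psi_D$. The rows with $A\subseteq C$ are useless for this, since their entry mixes all $k-2$ of the forms $\psi_{A\setminus\{a\}}$ and so cannot be fed into Lemma~\ref{notcrossbasis}; only the rows with $|C\cap A|=k-3$ are admissible, and a dual line $D\cup\{p\}$ carries exactly $t$ of these (one fewer once $C_0$ is excluded). The argument therefore rests on the inequality $t\geq n+1$, which is comfortably met in the intended application, where $t=k-2$ and $n=k-4$. The main step to verify carefully is precisely this count, together with the observation that the discarded rows with $A\subseteq C$ never interfere; all of the algebra is outsourced to Lemma~\ref{notcrossbasis}.
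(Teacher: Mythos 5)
Your proposal has a genuine gap, and the approach cannot be repaired. Both of your cases rest on the inequality $t \geqslant n+1$: a dual line $D\cup\{s\}$ carries only $t$ rows whose entry is governed by the single form $\psi_{D}$ (the $(t+1)$-st point on that line is the mixed row $A\cup\{s\}$, which, as you note, is useless), and Lemma~\ref{notcrossbasis} needs $n+1$ zeros to kill a form of degree $n$. But the parameters in the intended application are not $t=k-2$, $n=k-4$ as you claim; the paper takes $n=t=k-3$ (see the start of Section 4 and the proof of Theorem~\ref{main}), so $t=n<n+1$ and your count falls exactly one point short in the only case that matters. The lemma is also stated for arbitrary $n\leqslant |S|-k-t$, with no restriction tying $n$ below $t$, so your argument does not prove the statement as given.

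There is a structural reason no argument of your kind can work: you never use the hypothesis that $G$ sits inside the larger arc $S$ of size $q+k-1-t$ (nor that $q$ is odd). The lemma is not true as a statement about the matrix $\mathrm{P}_n$ of an arbitrary arc $G$: Section 4 of the paper shows that for \emph{any} arc $G$ of size $3k-6$ satisfying the conic-projection hypothesis --- for instance a subset of a normal rational curve, with no extendability assumed --- the column space of $\mathrm{P}_{k-3}$ \emph{does} contain a vector of weight one (Lemmas~\ref{thepsis}, \ref{projpsi}, \ref{woneQ}, \ref{weightoneQ} use only $G$). The whole point of the paper is the tension between that construction and Lemma~\ref{nowone}, which is where the embedding in $S$ enters: the paper's proof takes the everywhere-nonzero vector $v$ with $C$-entry $\alpha_C\prod_{z\in G\setminus C}\det(z,C)^{-1}$, where the coefficients $\alpha_C$ come from the arc equations of $S$ (Lemma~\ref{theeqn}), and shows via Lemma~\ref{theeqnodd} (which needs $q$ odd and an element $e\in S\setminus E$) that $v$ is orthogonal to every column of $\mathrm{P}_n$; a weight-one vector in the column space would then force a coordinate of $v$ to vanish. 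Any correct proof must invoke this external input from $S$; your purely internal incidence argument, even where the counting works, is proving a different (and in general false) statement.
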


\begin{proof}
Let $v$ be the row vector whose coordinates are indexed by the $(k-1)$-subsets $C$ of $E$ which contain a $(k-3)$-subset of $A$ and whose $C$ entry is
$$
\alpha_C \prod_{z \in G \setminus C} \det(z,C)^{-1}.
$$
Note that all the coordinates in $v$ are non-zero.

The standard scalar product of $v$ with the $(D,w)$ column of $\mathrm{P}_n$ is
$$
\sum \alpha_C \prod_{z \in G \setminus C} \det(z,C)^{-1}\prod_{u \in U \setminus \{ w \}} \det(u,C)=\sum \alpha_C \prod_{z \in (E \cup \{ w \}) \setminus C} \det(z,C)^{-1},
$$
where the sum runs over the subsets $C$ of $E$ of size $k-1$ containing $D$.
By Lemma~\ref{theeqnodd}, this sum is zero. 

Therefore, if there is a vector $r$ of weight one in the column space of $\mathrm{P}_n$ then $v \cdot r=0$. This implies one of the coordinates of $v$ is zero, which it is not.\end{proof}

Since $D$ is a set of $k-3$ linearly independent vectors, the subspace $\langle D \rangle$ has dimension $k-3$ and the quotient space $\mathrm{V}_k({\mathbb F}_q)/\langle D \rangle$ has dimension $3$. Let 
$$
G /D=\{ g+\langle D \rangle \ | \ g \in G \setminus D \}
$$
denote the set of $t+n+3$ vectors in this quotient space obtained from the vectors of $G \setminus D$.

Let $e$ be a fixed element of $E \setminus A$.

Let $\mathrm{M}_D$ be the matrix whose rows are indexed by $(k-1)$-subsets $C=D \cup L$, where $L$ is a  $2$-subset of $E\setminus A$ and whose $n+1$ columns are indexed by $w \in U$ and whose $(C,w)$ entry is 
$$
\prod_{u \in U \setminus \{ w \}} \det(u,C)=\prod_{u \in U \setminus \{ w \}} (u \cdot  C).
$$
Observe that $\mathrm{M}_D$ is a submatrix of $\mathrm{P}_n$.

\begin{lemma} \label{matrixmd}
If $G$ projects to a subset of a conic from $D$ then the row space of $\mathrm{M}_D$ is spanned by the rows indexed by $C=D\cup \{e,b \}$, where $b \in E \setminus (A\cup \{ e \})$.
\end{lemma}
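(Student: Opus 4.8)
The plan is to pass to the three-dimensional quotient $Q=\mathrm{V}_k({\mathbb F}_q)/\langle D\rangle$, in which by hypothesis the images of the vectors of $G$ lie on a conic, and to reduce each entry of $\mathrm{M}_D$ to a simple rational function of a conic parameter. Once this is done the assertion will drop out of a single partial-fraction identity.

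First I would fix a basis of $\mathrm{V}_k({\mathbb F}_q)$ extending a basis of $\langle D\rangle$. Since the rows coming from $D$ in the matrix defining $\det(a,D,b,c)$ involve only $\langle D\rangle$, there is a nonzero scalar $\lambda$, depending only on $D$ and the fixed canonical basis, such that $\det(a,D,b,c)=\lambda\,\delta(\bar a,\bar b,\bar c)$ for all $a,b,c$, where $\bar{\phantom{a}}$ denotes the image in $Q\cong\mathrm{V}_3({\mathbb F}_q)$ and $\delta$ is the $3\times 3$ determinant computed there; reordering the fixed ordered set $C=D\cup\{\ell_1,\ell_2\}$ changes this only by a sign that I absorb into a per-row constant. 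Because $G$ projects from $D$ into a conic, I would parametrise that conic as $\{(1,s,s^2)\}\cup\{(0,0,1)\}$, choosing the parametrisation (using the triple transitivity of $\mathrm{PGL}_2$ on the conic) so that the image of the unique vector of $A\setminus D$ is the point $(0,0,1)$. Then every vector of $(E\setminus A)\cup U$ acquires a finite, pairwise distinct parameter, which I denote by the same letter. (When $t=0$ the set $E\setminus A$ is a singleton, so $\mathrm{M}_D$ has no rows and the statement is vacuous; hence I may assume $t\geqslant 1$, in which case $t+n+2\leqslant q+1-t\leqslant q$ and the required finite parameters exist.) With representatives $\bar u=\mu_u(1,u,u^2)$, the Vandermonde determinant gives $\det(u,C)=\pm\lambda\mu_u(\ell_1-u)(\ell_2-u)(\ell_2-\ell_1)$.

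Collecting factors, the $(C,w)$ entry becomes
\[
\prod_{u\in U\setminus\{w\}}\det(u,C)
=\Big(\prod_{u\in U\setminus\{w\}}\mu_u\Big)\,(\pm\lambda)^{\,n}(\ell_2-\ell_1)^n\prod_{u\in U\setminus\{w\}}(\ell_1-u)(\ell_2-u).
\]
Scaling each row and each column of $\mathrm{M}_D$ by a nonzero constant preserves all linear dependencies among the rows, so I may discard the row factor $(\pm\lambda)^n(\ell_2-\ell_1)^n$ and the column factor $\prod_{u\in U\setminus\{w\}}\mu_u=\big(\prod_{u\in U}\mu_u\big)\mu_w^{-1}$. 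Writing $\Phi(x)=\prod_{u\in U}(x-u)$ and discarding the further row factor $\Phi(\ell_1)\Phi(\ell_2)$ (nonzero since $\ell_1,\ell_2\notin U$), the row indexed by $\{\ell_1,\ell_2\}$ reduces to $\big((\ell_1-w)^{-1}(\ell_2-w)^{-1}\big)_{w\in U}$, and this reduction changes neither the row space nor the property of being spanned by a given set of rows.

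Finally, the partial-fraction identity $\frac{1}{(x-w)(y-w)}=\frac{1}{y-x}\bigl(\frac{1}{x-w}-\frac{1}{y-w}\bigr)$ shows that this reduced row is a scalar multiple of $g_{\ell_1}-g_{\ell_2}$, where $g_x=\big((x-w)^{-1}\big)_{w\in U}$. In particular the reduced special rows are the multiples of $g_e-g_b$ with $b\in E\setminus(A\cup\{e\})$, while for arbitrary distinct $\ell_1,\ell_2\in E\setminus A$ one has $g_{\ell_1}-g_{\ell_2}=(g_e-g_{\ell_2})-(g_e-g_{\ell_1})$, which lies in their span (the case $e\in\{\ell_1,\ell_2\}$ being immediate). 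This yields the claim. The main obstacle I anticipate is not the concluding linear algebra but the scalar bookkeeping of the representative factors $\mu_u$: one must verify that they, together with $\lambda$ and the Vandermonde factors, split cleanly into row-only and column-only contributions, so that passing to the reduced matrix does not alter which rows span the row space.
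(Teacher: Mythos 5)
Your proposal is correct, but it takes a genuinely different route from the paper's. The paper never parametrises the conic: for each $3\times 3$ submatrix of $\mathrm{M}_D$ on the rows $D\cup\{e,a\}$, $D\cup\{e,b\}$, $D\cup\{a,b\}$ it replaces the point $w_1$ indexing one column by an indeterminate $X$, so that the determinant becomes a homogeneous quadratic $g_D(X)$ on the quotient which visibly vanishes at the five points $\bar{e},\bar{a},\bar{b},\bar{w}_2,\bar{w}_3$ of the arc $G/D$; since the conic through five points in general position is unique up to scalar, $g_D$ is proportional to $f_D$, hence vanishes at $\bar{w}_1$ as well, so every such minor is zero and the row $D\cup\{a,b\}$ depends on the two rows through $e$. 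You instead normalise coordinates so that the conic is the standard normal rational curve with the image of $A\setminus D$ at infinity, factor each entry by the Vandermonde identity into (row factor) $\times$ (column factor) $\times\, 1/\bigl((\ell_1-w)(\ell_2-w)\bigr)$, and read off the dependencies from a partial-fraction identity. What your route buys: the dependencies come with explicit coefficients; $\mathrm{M}_D$ is exhibited, up to row and column scaling, as a Cauchy-type matrix whose row space lies inside the span of the $t$ vectors $g_e-g_b$, which also makes the rank bound used in Lemma~\ref{thepsis} transparent; the argument is uniform in $n$; and it avoids the slightly delicate passage from ``all $3\times 3$ minors vanish'' to ``the third row lies in the span of the first two''. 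What the paper's route buys: it needs no parametrisation or choice of representatives (only that $G/D$ lies on the zero set of some quadratic together with the five-point uniqueness of conics), and its polynomial-substitution trick is the technique that recurs and generalises in this line of work. One slip in your write-up, which you yourself flag: the correct factorisation is $\det(u,C)=\pm\lambda\,\mu_u\mu_{\ell_1}\mu_{\ell_2}(\ell_1-u)(\ell_2-u)(\ell_2-\ell_1)$, so each entry carries the additional factor $(\mu_{\ell_1}\mu_{\ell_2})^n$; this is harmless, since it is a per-row nonzero constant and is discarded along with the other row factors.
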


\begin{proof}
Since $G$ projects to a subset of a conic from $D$, we have that $G/D$ is contained in the set of zeros of a homogeneous polynomial $f_D$ in 3 variables and of degree $2$.

Let $W=\{w_1,w_2,w_3 \}$ be a subset of $U$ of size $3$. Consider the $3\times 3$ submatrix $\mathrm{M}$ of $\mathrm{M}_D$ whose columns are indexed by $w \in W$, and whose rows are indexed by $D \cup \{e,a \}$, $D \cup \{e,b \}$ and $D \cup \{a, b \}$.
Define $g_D$ to be the polynomial of degree two, which is the determinant of $\mathrm{M}$ where we replace $w_1$ by $X$. The $C$-row in this determinant is
$$
(\prod_{u \in U \setminus \{ w_1 \}} (u \cdot C),\ \ \ (X\cdot C) \prod_{u \in U \setminus \{ w_1,w_2 \}} (u\cdot C),\ \ \ (X \cdot C) \prod_{u \in U \setminus \{ w_1,w_3 \}} (u\cdot C) ).
$$
By changing the basis so that the basis includes the elements of $D$, $g_D(X)$ will be a homogeneous polynomial in three variables. 
Clearly $g_D(w_2)=g_D(w_3)=0$ since the determinant will have repeated columns. Moreover $g_D(a)=0$, since the determinant will have a $2 \times 2$ submatrix of zeros. Similarly, 
$g_D(b)=0=g_D(e)=0$. Thus $g_D$ is the unique polynomial, up to scalar factor, whose set of zeros contains these five vectors of the quotient space. Hence, $g_D$ is a scalar multiple of $f_D$. Therefore, $g_D(w_1)$ is also zero and the determinant of $\mathrm{M}$ is zero.

Since every $3\times 3$ submatrix $\mathrm{M}$ of $\mathrm{M}_D$ has rank two, the $3 \times (n+1)$ matrix whose columns are indexed by $w \in U$, and whose rows are indexed by $D \cup \{e,a \}$, $D \cup \{e,b \}$ and $D \cup \{a, b \}$ has rank two. Therefore, in $\mathrm{M}_D$, the row indexed by $D \cup \{a, b \}$ must be a linear combination of the rows indexed by $D \cup \{e,a \}$ and $D \cup \{e,b \}$.
\end{proof}

\begin{lemma} \label{thepsis}
If $G$ projects to a subset of a conic from $D$ and $n\geqslant t$ then there exist $\lambda_w \in {\mathbb F}_q$ such that
$$
\psi_D(X)=\sum_{w \in U} \lambda_w \prod_{u \in U \setminus \{ w \}} (u\cdot X) \not\equiv 0,
$$
is zero at $D \cup L$, for all $2$-subsets $L$ of $E \setminus A$.
\end{lemma}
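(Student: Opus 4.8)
The plan is to recognise that, once a coefficient vector $\lambda=(\lambda_w)_{w\in U}$ is fixed, the value of $\psi_D$ at a dual point $C=D\cup L$ is exactly the $C$-entry of the matrix–vector product $\mathrm{M}_D\lambda$ (with $\lambda$ regarded as a column vector). Indeed, the $(C,w)$ entry of $\mathrm{M}_D$ is $\prod_{u\in U\setminus\{w\}}(u\cdot C)$, so
$$
\psi_D(C)=\sum_{w\in U}\lambda_w\prod_{u\in U\setminus\{w\}}(u\cdot C)=(\mathrm{M}_D\lambda)_C.
$$
The rows of $\mathrm{M}_D$ are precisely those indexed by $C=D\cup L$ with $L$ a $2$-subset of $E\setminus A$, so demanding that $\psi_D$ vanish at all such $C$ is the same as demanding that $\lambda$ lie in the null space of $\mathrm{M}_D$. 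The task therefore reduces to producing a nonzero $\lambda$ in that null space for which the associated polynomial is not identically zero.

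First I would bound the rank of $\mathrm{M}_D$. It has $|U|=n+1$ columns, and since $G$ projects to a subset of a conic from $D$, Lemma~\ref{matrixmd} tells us that its row space is spanned by the rows indexed by $D\cup\{e,b\}$ with $b\in E\setminus(A\cup\{e\})$. As $|E\setminus(A\cup\{e\})|=(k+t-1)-(k-1)=t$, we obtain $\mathrm{rank}(\mathrm{M}_D)\leqslant t$. By rank–nullity the null space of $\mathrm{M}_D$ has dimension at least $(n+1)-t$, and the hypothesis $n\geqslant t$ forces this to be at least $1$. Hence a nonzero $\lambda$ in the null space exists.

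It then remains to verify that such a $\lambda$ yields $\psi_D\not\equiv 0$, and this is the only genuinely delicate point: a priori a nonzero coefficient vector could collapse to the zero polynomial, which would make the conclusion vacuous. Here I would invoke Lemma~\ref{notcrossbasis}, applied with the line $A$ and the $(n+1)$-set $U$. If $\psi_D$ were identically zero then in particular it would vanish at the $n+1$ distinct points $A\cup\{w_0\}$, $w_0\in U$, lying on the line $A$; the lemma would then force every $\lambda_w=0$, contradicting $\lambda\neq 0$. Equivalently, Lemma~\ref{notcrossbasis} asserts that the $n+1$ polynomials $\prod_{u\in U\setminus\{w\}}(u\cdot X)$ are linearly independent, so the map $\lambda\mapsto\psi_D$ is injective and a nonzero $\lambda$ cannot produce the zero polynomial. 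Combining the three steps delivers the required $\psi_D$.

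The main obstacle is exactly this final nonvanishing check. The dimension count producing a nonzero $\lambda$ is routine once Lemma~\ref{matrixmd} has capped the rank at $t$, but without the linear-independence content of Lemma~\ref{notcrossbasis} one could not rule out that every admissible $\lambda$ gives $\psi_D\equiv 0$; it is that injectivity that turns a null-space vector into a genuinely nonzero polynomial vanishing on all the prescribed $(k-1)$-subsets.
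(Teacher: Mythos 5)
Your proposal is correct and takes essentially the same route as the paper: bound $\mathrm{rank}(\mathrm{M}_D)\leqslant t$ via Lemma~\ref{matrixmd}, use rank--nullity together with $n\geqslant t$ to produce a nonzero $\lambda$ in the null space, and identify the $C$-coordinate of $\mathrm{M}_D\lambda$ with $\psi_D(C)$. The only difference is that you explicitly justify $\psi_D\not\equiv 0$ by the linear independence of the polynomials $\prod_{u\in U\setminus\{w\}}(u\cdot X)$ (the content of Lemma~\ref{notcrossbasis}), a step the paper leaves implicit by equating ``$\lambda$ not all zero'' with ``$\psi_D$ not identically zero'' --- a worthwhile clarification rather than a different argument.
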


\begin{proof}
By Lemma~\ref{matrixmd}, the matrix $\mathrm{M}_D$ has rank at most $t \leqslant n$. Since $\mathrm{M}_D$ has $n+1$ columns there are elements $\lambda_w \in {\mathbb F}_q$, not all zero, such that 
$$
\sum_{w \in U} \lambda_w v_w=0,
$$
where $v_w$ is the column of $\mathrm{M}_D$ indexed by $w$. Since it is zero, the $C$ coordinate of this linear combination is zero. However, it is also the evaluation of $\psi_D$ at $C$. Thus, we have that $\psi_D(C)=0$ for all $C=D \cup L$, where $L$ is a $2$-subset of $E \setminus A$. 
\end{proof}

In light of Lemma~\ref{thepsis}, we will now take $n=t$.

Let 
$$
v_D=\sum_{w \in U} \lambda_w v_{D,w},
$$
where $v_{D,w}$ is the column of $\mathrm{P}_n$ indexed by $(D,w)$. Note that the $C$-coordinate of $v_D$ is the evaluation of $\psi_D$ if $C \supset D$ and zero otherwise.

By Lemma~\ref{thepsis}, the $C$-coordinate in $v_D$ is zero if $C=D \cup L$, for some $2$-subset $L$ of $E \setminus A$, so the only possibly non-zero coordinates of $v_D$ are indexed by $(k-1)$-subsets $C=A \cup \{b\}$, for some $b \in E \setminus A$.

Let us define a $(t+1) \times (k-2)$ matrix $\mathrm{Q}_t$, whose rows are indexed by $(k-1)$-subsets $C=A \cup \{ b\}$, for some $b \in E \setminus A$ and whose columns are the restriction of $v_D$ to these $C$ coordinates. So the columns are indexed by the $(k-3)$-subsets $D$ of $A$. The following lemmas are immediate from this discussion and the definition of $\mathrm{Q}_t$.

\begin{lemma} \label{evalpsi}
The $C$ entry of the $D$ column of $\mathrm{Q}_t$ is $\psi_D(C)$.
\end{lemma}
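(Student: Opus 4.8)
The plan is simply to unwind the definitions, since the statement is a direct transcription of how $v_D$ and $\mathrm{Q}_t$ were constructed. First I would recall that by definition $v_D=\sum_{w\in U}\lambda_w v_{D,w}$, where $v_{D,w}$ is the column of $\mathrm{P}_n$ indexed by $(D,w)$. By the defining formula for the entries of $\mathrm{P}_n$, the $C$-coordinate of $v_{D,w}$ equals $\prod_{u\in U\setminus\{w\}}(u\cdot C)$ when $C\supset D$ and is zero otherwise. Taking the linear combination with weights $\lambda_w$, the $C$-coordinate of $v_D$ is therefore $\sum_{w\in U}\lambda_w\prod_{u\in U\setminus\{w\}}(u\cdot C)$, which is precisely $\psi_D(C)$, whenever $C\supset D$, and is zero otherwise; this is exactly the remark already recorded just after the definition of $v_D$.

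Next I would invoke the definition of $\mathrm{Q}_t$: its rows are indexed by the $(k-1)$-subsets $C=A\cup\{b\}$ with $b\in E\setminus A$, and its $D$ column is the restriction of $v_D$ to these coordinates, with the columns indexed by the $(k-3)$-subsets $D$ of $A$. Hence, by construction, the $(C,D)$ entry of $\mathrm{Q}_t$ is nothing but the $C$-coordinate of $v_D$ for such a $C$. The one thing to verify is that the nonzero branch of the computation above always applies for these rows, i.e. that $C\supset D$; but each such $C$ equals $A\cup\{b\}$ and each column index $D$ is a subset of $A$, so $D\subset A\subset C$ automatically. Consequently the $C$-coordinate of $v_D$ equals $\psi_D(C)$, which is the assertion.

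I do not expect any genuine obstacle here: the lemma is immediate from the discussion preceding it, as the paper already signals. The only points requiring a moment's care are bookkeeping ones. One should check that the rows of $\mathrm{Q}_t$ indeed lie among the row indices of $\mathrm{P}_n$: a set $C=A\cup\{b\}$ is a $(k-1)$-subset of $E$ containing the $(k-3)$-subset $D$ of $A$, so it is a legitimate row index and the restriction defining $\mathrm{Q}_t$ is well posed. One should also confirm the indexing conventions match, namely that the $t+1$ choices of $b\in E\setminus A$ and the $k-2$ choices of $(k-3)$-subset $D$ of $A$ give the stated $(t+1)\times(k-2)$ shape. With these routine checks in place, the equality of the $(C,D)$ entry with $\psi_D(C)$ is forced.
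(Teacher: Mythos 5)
Your proposal is correct and matches the paper exactly: the paper gives no separate proof, stating only that the lemma is ``immediate from this discussion and the definition of $\mathrm{Q}_t$,'' and your unwinding of the definitions of $v_D$, $\mathrm{P}_n$ and $\mathrm{Q}_t$ (including the observation that $D \subset A \subset C$ guarantees the nonzero branch applies) is precisely the intended argument. The extra bookkeeping checks you record, such as the $(t+1)\times(k-2)$ shape and the legitimacy of the row indices, are sound and only make explicit what the paper leaves implicit.
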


\begin{lemma} \label{weightoneQ}
If there is a vector of weight one in the column space of $\mathrm{Q}_t$ then there is a vector of weight one in the column space of $\mathrm{P}_t$.
\end{lemma}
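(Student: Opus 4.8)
The plan is to realise the column space of $\mathrm{Q}_t$ inside the column space of $\mathrm{P}_t$ by a zero-padding lift, and to check that this lift preserves weight. Recall that $\mathrm{Q}_t$ was constructed so that its $D$-column is precisely the restriction of the vector $v_D = \sum_{w \in U} \lambda_w v_{D,w}$ to the rows $C = A \cup \{ b \}$, $b \in E \setminus A$. Since each $v_{D,w}$ is a column of $\mathrm{P}_t$, the vector $v_D$ already lies in the column space of $\mathrm{P}_t$, so the task reduces to comparing supports.

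First I would record exactly which rows of $\mathrm{P}_t$ can carry nonzero entries of $v_D$. By the definition of $\mathrm{P}_t$, each $(D,w)$-column vanishes on every row $C$ with $C \not\supset D$, so $v_D$ is supported on the $(k-1)$-subsets $C \supset D$. Among these, a row with $C \cap A = D$ has the form $C = D \cup L$ for a $2$-subset $L$ of $E \setminus A$, and Lemma~\ref{thepsis} forces $\psi_D(C) = 0$ there. Hence, as already noted in the discussion preceding $\mathrm{Q}_t$, the only rows on which $v_D$ can be nonzero are the rows $C = A \cup \{ b \}$ that index $\mathrm{Q}_t$; in other words, $v_D$ is nothing but the zero-padding of the $D$-column of $\mathrm{Q}_t$ back out to the full set of rows of $\mathrm{P}_t$.

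With this support statement in place the conclusion follows immediately. A weight-one vector in the column space of $\mathrm{Q}_t$ is a linear combination, with coefficients $\mu_D$ say, of the columns of $\mathrm{Q}_t$, and this combination is the restriction to the rows $C = A \cup \{ b \}$ of the vector $\sum_D \mu_D v_D$. The latter lies in the column space of $\mathrm{P}_t$, and since every $v_D$ vanishes on all remaining rows of $\mathrm{P}_t$, the zero-padding introduces no new nonzero coordinates. Thus $\sum_D \mu_D v_D$ has the same support, hence weight one, and is the desired vector.

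The only substantive point, and the one I would be most careful with, is the support claim that $v_D$ genuinely vanishes on every row other than the $C = A \cup \{ b \}$: this is exactly where Lemma~\ref{thepsis} is used, to kill the coordinates indexed by $C = D \cup L$. Everything else is the formal remark that restriction to a coordinate block and zero-padding are mutually inverse on vectors already supported in that block, so no real obstacle remains once the support of $v_D$ has been pinned down.
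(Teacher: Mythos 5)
Your proposal is correct and is essentially the paper's own argument: the paper leaves Lemma~\ref{weightoneQ} as ``immediate from this discussion,'' where the discussion consists precisely of the facts you spell out --- each $v_D$ lies in the column space of $\mathrm{P}_t$, and by Lemma~\ref{thepsis} its support is confined to the rows $C=A\cup\{b\}$ indexing $\mathrm{Q}_t$, so restriction and zero-padding identify the column space of $\mathrm{Q}_t$ with a weight-preserving subspace of the column space of $\mathrm{P}_t$. Your write-up simply makes these implicit steps explicit, with no deviation in method.
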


\section{Arcs that extend to arcs of size $q+2$}

Suppose now that $n=t=k-3$.

Thus $G$ is an arc of size $3k-6$ which extends to an arc of size $q+2$, $E$ is a subset of $G$ of size $2k-4$, $A$ is a subset of $E$ of size $k-2$ and $D$ is a subset of $A$ of size $k-3$.

\begin{lemma} \label{projpsi}
If $G$ projects to a subset of a conic from all $(k-3)$-subsets of $E$ then $\psi_D(C)=0$ for all $C \subseteq E \setminus (A \setminus D)$. Moreover, $\psi_D(E \setminus D) \neq 0$. 
\end{lemma}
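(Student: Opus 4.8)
The plan is to deduce both assertions from one structural fact: that a single $(k-3)$-dimensional subspace $V$ of $\mathbb{F}_q^{k-2}$ contains every row
$$\rho_C=\Big(\prod_{u\in U\setminus\{w\}}(u\cdot C)\Big)_{w\in U}$$
indexed by a $(k-1)$-subset $C\subseteq E\setminus(A\setminus D)$, together with the observation that $\psi_D(C)=\lambda\cdot\rho_C$, where $\lambda=(\lambda_w)_{w\in U}$ are the coefficients of $\psi_D$. Write $a_0:=A\setminus D$, so that $E\setminus(A\setminus D)=E\setminus\{a_0\}$ has size $2k-5$, and recall $|U|=n+1=k-2$. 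For every $(k-3)$-subset $F\subseteq E\setminus\{a_0\}$ the hypothesis gives that $G$ projects to a conic from $F$, so the argument of Lemma~\ref{matrixmd}, applied with $F$ in place of $D$ and a suitable auxiliary $(k-2)$-set containing $F$, shows that all rows $\rho_{F\cup\{p,q\}}$ with $p,q\in E\setminus F$ lie in the span $V_F$ of $k-3$ of them. Projecting from $F$ replaces each coordinate $u\cdot(F\cup\{p,q\})$ by $\gamma_F(r_u-s_p)(r_u-s_q)$, where $r_u,s_p,s_q$ are the parameters of the projected points on the conic; the independence of these $k-3$ spanning rows then reduces to the nonsingularity of a Cauchy matrix $\big(1/(r_u-s_b)\big)$, so $\dim V_F=k-3$.

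The crux is to show that $V_F$ does not depend on $F$. If $F$ and $F'$ are two such subsets sharing $k-4$ elements, then $|F\cup F'|=k-2$, and for each of the $k-3$ elements $z\in(E\setminus\{a_0\})\setminus(F\cup F')$ the row $\rho_{(F\cup F')\cup\{z\}}$ lies in $V_F\cap V_{F'}$, since it is indexed by a set containing both $F$ and $F'$. A second Cauchy computation, carried out in the conic from $F$, shows that these $k-3$ rows are linearly independent, so they span both $V_F$ and $V_{F'}$; therefore $V_F=V_{F'}$. Because the $(k-3)$-subsets of the $(2k-5)$-set $E\setminus\{a_0\}$ form a connected graph under the relation of sharing $k-4$ elements, all the spaces $V_F$ coincide in one subspace $V$. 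I expect this paragraph to be the main obstacle: establishing that the conic-row-spaces agree without any global parametrisation of $G$ is exactly where the overlap argument and the Cauchy nonsingularities have to do the work.

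With $V$ in hand the first assertion is immediate. Since $D\subseteq E\setminus\{a_0\}$, the defining relation $\sum_w\lambda_w v_w=0$ of Lemma~\ref{thepsis} says precisely that $\lambda$ annihilates every row $\rho_{D\cup L}$ with $L$ a $2$-subset of $E\setminus A$. These rows span a $(k-3)$-dimensional space contained in $V_D$, hence equal to $V_D=V$, so $\lambda\perp V$. Any $(k-1)$-subset $C\subseteq E\setminus\{a_0\}$ contains a $(k-3)$-subset $F$, whence $\rho_C\in V_F=V$ and $\psi_D(C)=\lambda\cdot\rho_C=0$.

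Finally, to obtain $\psi_D(E\setminus D)\neq 0$ I would restrict $\psi_D$ to the dual line $B=E\setminus A$, a $(k-2)$-set disjoint from $U$, on which $\psi_D$ is a binary form of degree $k-3$. By the first assertion it vanishes at the $k-3$ distinct points $B\cup\{d\}$, $d\in D$, each being a $(k-1)$-subset of $E\setminus\{a_0\}$. Since $\psi_D\not\equiv 0$ by Lemma~\ref{thepsis}, Lemma~\ref{notcrossbasis} rules out $k-2$ zeros of $\psi_D$ on $B$, so these are exactly its zeros on the line. As $a_0\neq d$ for every $d\in D$, the point $B\cup\{a_0\}=E\setminus D$ is not among them, and therefore $\psi_D(E\setminus D)\neq 0$.
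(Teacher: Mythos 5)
Your proposal is correct, but its central mechanism is genuinely different from the paper's. Both proofs share the same skeleton --- compare overlapping $(k-3)$-subsets, propagate by connectivity of the Johnson-type graph on $(k-3)$-subsets of $E\setminus\{a_0\}$, and finish the ``moreover'' claim by counting zeros on the line $E\setminus A$ via Lemma~\ref{notcrossbasis} (your last paragraph is essentially identical to the paper's). Where you diverge is the overlap step: you work on the \emph{row-space} side, showing that every $M_F$ has row space $V_F$ of dimension exactly $k-3$ (upper bound from Lemma~\ref{matrixmd}, lower bound from nonsingularity of Cauchy matrices $\bigl(1/(r_w-s_b)\bigr)$ after the conic parametrisation), and that overlapping $F,F'$ share $k-3$ independent rows, forcing $V_F=V_{F'}=V$ and hence $\lambda\perp V$. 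The paper instead works on the \emph{annihilator} side: it never computes the exact rank of $M_D$, but observes that $\psi_D$ and $\psi_{D'}$ both vanish at the $t$ points $D\cup D'\cup\{b\}$ of the dual line $D\cup D'$, and that vanishing at $t$ distinct points of a line determines the coefficient vector up to scalar (injectivity of restriction to the line being exactly Lemma~\ref{notcrossbasis}); hence $\psi_D$ and $\psi_{D'}$ are proportional and zeros propagate. The two arguments are dual to one another once the rank is known to be $k-3$, but the paper's version is coordinate-free and needs only the inequality $\mathrm{rank}\,M_D\leqslant k-3$, whereas yours buys a stronger structural fact (a single $(k-3)$-dimensional space containing every row $\rho_C$, $C\subseteq E\setminus\{a_0\}$, with $\lambda$ spanning its orthogonal complement) at the cost of explicit computation. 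If you write your version up in full, you must also record two small facts you glossed over: the conic through $G/F$ is nondegenerate (it contains $2k-3\geqslant 7$ points of a planar arc, and a degenerate conic meets an arc in at most $4$ points), and the parametrisation can be chosen so that no point of $G/F$ is the point at infinity (possible since $q+1>2k-3$, which follows from $3k-6\leqslant q+2$ and $k\geqslant 5$); only then are all parameters $r_u,s_b$ defined, distinct, and the Cauchy determinants nonzero.
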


\begin{proof}
Let $D$ and $D'$ be $(k-3)$-subsets of $E \setminus \{ a \}$, where $\{ a \}=A \setminus D$. Suppose that $|D' \cap D|=k-4$.

By hypothesis, $G$ projects to a subset of a conic from $D$ and $D'$. By Lemma~\ref{thepsis}, with $A=D \cup \{ a \}$ and $A=D' \cup \{ a \}$ respectively, there is a $\psi_D$ and $\psi_{D'}$ which are both zero at $D \cup D' \cup \{ b\}$, for all $b \in E \setminus (D \cup D' \cup \{ a \})$.

There are $n+1=t+1$ coefficients $\lambda_w$ in the definition of $\psi_D$ and there are $t$ elements in $b \in E \setminus (D \cup D' \cup \{ a \})$. So up to scalar multiple, $\psi_D$ is determined by the equations $\psi( D \cup D' \cup \{ b\})=0$. Note that for each $b$, the equation $\psi( D \cup D' \cup \{ b\})=0$ gives an independent condition, since for each $b$ this is a distinct point on the line $D \cup D'$.

However, $\psi_{D'}$ is determined by the same set of equations, so we conclude that $\psi_D$ and $\psi_{D'}$ are scalar multiples of each other.

Since $\psi_{D'}(D' \cup L')=0$, for all $2$-subsets $L'$ of $E \setminus(D' \cup \{ a \})$, we have that $\psi_D(D' \cup L')=0$. 

Now, repeating the above with $D$ replaced with $D'$ and $D'$ replaced with $D''$, where $|D' \cap D''|=k-4$ and $|D \cap D''|=k-5$, we have that $\psi_D(D'' \cup L'')=0$ for all $2$-subsets $L''$ of $E \setminus(D'' \cup \{ a \})$. Continuing in this way we conclude that $\psi_D(C)=0$ for all $(k-1)$-subsets $C$ of $E \setminus \{ a \}$.

%Let $\overline{D}$ be a $(k-3)$-subset of $E$ containing $a$ and define $\psi_{\overline{D}}$ with respect to an element $\overline{a} \not\in E \setminus \overline{D}$.

If $\psi_D(E \setminus D)=0$ then $\psi_D$ has $t+1$ zeros on the line $E \setminus A$. By Lemma~\ref{notcrossbasis}, this implies that $\psi_D \equiv 0$, which it is not by Lemma~\ref{thepsis}. Therefore,  $\psi_D(E \setminus D) \neq 0$, which completes the proof.

\end{proof}

\begin{lemma} \label{woneQ}
If $n=t=k-3$ and $G$ projects to a subset of a conic from every $(k-3)$-subset $D$ of $E$ then there is a vector of weight one in the column space of $\mathrm{Q}_t$.
\end{lemma}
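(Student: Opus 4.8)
The plan is to show that the matrix $\mathrm{Q}_t$ is non-singular. Since (with $n=t=k-3$) it is a square $(k-2)\times(k-2)$ matrix, its column space will then be all of $\mathbb{F}_q^{k-2}$, and in particular will contain vectors of weight one, which is exactly what is asserted. Recall the indexing in this regime: the rows of $\mathrm{Q}_t$ are the $k-2$ sets $A\cup\{b\}$ with $b\in E\setminus A$, the columns are the $k-2$ sets $D_a:=A\setminus\{a\}$ with $a\in A$, and by Lemma~\ref{evalpsi} the $(A\cup\{b\},D_a)$ entry is $\psi_{D_a}(A\cup\{b\})$. Note that $|U|=|E\setminus A|=|A|=k-2$, which is what makes $\mathrm{Q}_t$ square.

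First I would take a vector $(\mu_a)_{a\in A}$ in the kernel of $\mathrm{Q}_t$ and set $\Psi=\sum_{a\in A}\mu_a\psi_{D_a}$. Each $\psi_{D_a}$ has the form $\sum_{w\in U}\lambda_w\prod_{u\in U\setminus\{w\}}(u\cdot X)$, and hence so does $\Psi$. The condition that $(\mu_a)$ lies in the kernel is precisely that $\Psi(A\cup\{b\})=0$ for every $b\in E\setminus A$; that is, $\Psi$ vanishes at the $k-2$ points $A\cup\{b\}$, which are distinct points of the line $A$ of the dual space (distinct because any two $(k-1)$-subsets of the arc containing $A$ span distinct hyperplanes). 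Since $\Psi$ has the required shape and $|U|=n+1=k-2$, Lemma~\ref{notcrossbasis} applies with these $n+1$ zeros on the line $A$ and forces $\Psi\equiv 0$.

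To finish I would exploit the triangular evaluation behaviour supplied by Lemma~\ref{projpsi}. Fix $a^*\in A$ and evaluate the identically zero polynomial $\Psi$ at the $(k-1)$-set $E\setminus D_{a^*}=(E\setminus A)\cup\{a^*\}$. For each $a\neq a^*$ this set avoids $a$, so it is a $(k-1)$-subset of $E\setminus\{a\}$ and Lemma~\ref{projpsi} gives $\psi_{D_a}(E\setminus D_{a^*})=0$; on the other hand $\psi_{D_{a^*}}(E\setminus D_{a^*})\neq 0$, again by Lemma~\ref{projpsi}. Therefore $0=\Psi(E\setminus D_{a^*})=\mu_{a^*}\psi_{D_{a^*}}(E\setminus D_{a^*})$, so $\mu_{a^*}=0$. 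As $a^*$ was arbitrary the kernel is trivial, $\mathrm{Q}_t$ is non-singular, and the conclusion follows.

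The substantive work is already packaged in the two lemmas invoked: Lemma~\ref{notcrossbasis} (a polynomial of this shape with $n+1$ zeros on a line is identically zero) and the vanishing/non-vanishing statements of Lemma~\ref{projpsi}. I do not expect a genuine obstacle in the argument itself; the main thing to verify carefully is the size bookkeeping forced by $n=t=k-3$, namely that the $A\cup\{b\}$ give exactly $n+1=|U|$ distinct points on the line $A$ so that Lemma~\ref{notcrossbasis} is applicable, and that $E\setminus D_{a^*}$ is a $(k-1)$-set missing every $a\neq a^*$ so that Lemma~\ref{projpsi} separates the summands of $\Psi$. Once these incidences are checked, non-singularity, and hence the existence of a weight-one vector in the column space of $\mathrm{Q}_t$, is immediate.
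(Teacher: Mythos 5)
Your proposal is correct and follows essentially the same argument as the paper: both show $\mathrm{Q}_t$ is non-singular by taking a vanishing linear combination $\sum \mu_D \psi_D$ of the columns, invoking Lemma~\ref{notcrossbasis} to force this combination to be identically zero from its $t+1$ zeros on the line $A$, and then using the vanishing/non-vanishing dichotomy of Lemma~\ref{projpsi} at the points $E \setminus D$ to conclude each $\mu_D = 0$. The only difference is presentational (you argue via a kernel vector, the paper via linear independence of columns), so there is nothing substantive to add.
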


\begin{proof}
Since $t=k-3$ the matrix $\mathrm{Q}_t$ is a square matrix. By Lemma~\ref{evalpsi}, a column of $\mathrm{Q}_t$ is the evaluation of 
$\psi_D(X)$
at the $(k-1)$-subsets $C$ of $A \cup \{ b \}$, where $b \in E \setminus A$. If $\mathrm{Q}_t$ does not have full rank then there is a linear combination of the columns which is zero. The $C$ coordinate of a linear combination of the columns is the evaluation of 
$$
\psi_A(X)=\sum \mu_D \psi_D(X),
$$
for some $\mu_D \in {\mathbb F}_q$, not all zero, where the sum runs over the $(k-3)$-subsets $D$ of $A$.

If $\psi_A$ is zero at all points $A \cup \{ b \}$, where $b \in E \setminus A$ then it is zero at $t+1$ points on the line $A$. By Lemma~\ref{notcrossbasis}, $\psi_A \equiv 0$. However, for a subset $D$ of $E$ of size $k-3$,
$$
\psi_A(E \setminus D)= \mu_D \psi_D(E \setminus D),
$$
and by Lemma~\ref{projpsi}, $\psi_D(E \setminus D) \neq 0$. Therefore $\mu_D=0$, which implies that the columns of $\mathrm{Q}_t$ are linearly independent. Since $\mathrm{Q}_t$ is a square matrix there is a vector of weight one in the column space of $\mathrm{Q}_t$.
\end{proof}

We can now prove Theorem~\ref{main}.

\begin{proof} (of Theorem~\ref{main}) 
Let $G$ be an arc of ${\mathrm V}_k({\mathbb F}_q)$ of size $3k-6$ and suppose that $E$ is a subset of $G$ of size $2k-3$ and that $G$ projects to a subset of a conic from every $(k-3)$-subset of $E$. Suppose that $G$ extends to an arc of size $S$ of size $q+2$, so $t=k-3$.

By Lemma~\ref{woneQ}, there is a vector of weight one in the column space of $\mathrm{Q}_{k-3}$. By Lemma~\ref{weightoneQ}, there is a vector of weight one in the column space of $\mathrm{P}_{k-3}$, which contradicts Lemma~\ref{nowone}.
\end{proof}

\section{Comments}

It is a long-standing conjecture, dating back to the 1950's, that there are no arcs of size $q+2$ for $4 \leqslant k \leqslant q-2$, see \cite{HS2001}, \cite{MS1977} or \cite{Vardy2006} for example. It was proven in \cite{Ball2012} that the conjecture is true for $q$ prime. 

This article can be considered as an example of how the system of equations in Lemma~\ref{theeqn} can be used to prove results about arcs and try to verify this conjecture when $q$ is not a prime. It is by no means easy, but this article at least demonstrates that it is possible. 

With regard to the result itself, the hypothesis that the projections lie on a conic may not be necessary. Indeed, computations of explicit examples indicate that this is in fact the worst-case scenario. In other words, it is only in this case that have to take $n$ so large to prove that $\mathrm{P}_{n}$ has a vector of weight one in its column space, assuming that $G$ extends to a $(q+2)$-arc. For other arcs, it appears that $\mathrm{P}_{n}$ has a vector of weight one in its column space for smaller $n$. However, $\mathrm{Q}_{n}$ does not have a vector of weight one in its column space, if we remove the projection hypothesis, so one must consider the larger matrix $\mathrm{P}_{n}$ in this case. 

In \cite{Chowdhury2015} it is conjectured that a larger matrix $\mathrm{M}_{n}$ has full rank and in \cite{Ball2016a} that it has a vector of weight one in its column space, for any arc where $k \leqslant p+n(p-2)$ and where $p$ is the characteristic of ${\mathbb F}_q$. Again, we are assuming that $G$ extends to a $(q+2)$-arc. This would imply that there are no arcs of size $q+2$ for $k \leqslant (pq-2q+6p-10)/(2p-3)$. We conjecture that the same is true for the smaller matrix $\mathrm{P}_{n}$. In other words we conjecture that $\mathrm{P}_{n}$ has a vector of weight one in its column space, for any arc that extends to a $(q+2)$-arc, when $k \leqslant p+n(p-2)$, which would contradict Lemma~\ref{nowone} and imply that there are no arcs of size $q+2$ for $k \leqslant (pq-2q+6p-10)/(2p-3)$.

\affiliationone{
   Simeon Ball\\
   Departament de Matem\`atiques, \\
Universitat Polit\`ecnica de Catalunya, \\
M\`odul C3, Campus Nord,\\
c/ Jordi Girona 1-3,\\
08034 Barcelona, Spain \\
   \email{simeon@ma4.upc.edu}}
   \affiliationtwo{
   Jan De Beule\\
Vakgroep Wiskunde,\\
Vrije Universiteit Brussel,\\
Pleinlaan 2,\\
B-1050 Brussels, \\
Belgium \\
   \email{jan@debeule.eu}}


\begin{thebibliography}{99}

\bibitem{Ball2012}
{\bibname S. Ball}, On sets of vectors of a finite vector space in which every subset of basis size is a basis, {\it J. Eur. Math. Soc.}, {\bf 14} (2012) 733--748. 

\bibitem{Ball2015}
{\bibname S. Ball}, {\em Finite Geometry and Combinatorial Applications}, London Mathematical Society Student Texts {\bf 82}, Cambridge University Press, 2015.

\bibitem{Ball2016a}  {\bibname S. Ball}, Extending small arcs to large arcs, {\tt arXiv:1603.05795}, 2016.

\bibitem{Chowdhury2015} {\bibname A. Chowdhury}, Inclusion Matrices and the MDS Conjecture, {\tt arXiv:1511.03623v2}, 2015. 

\bibitem{HS2001}  {\bibname J. W. P. Hirschfeld \and L. Storme}, The packing problem in statistics, coding theory and finite projective spaces: update 2001, in {\it Developments in Mathematics}, {\bf 3}, Kluwer Academic Publishers. {\it Finite Geometries}, Proceedings of the {\it Fourth Isle of Thorns Conference}, pp. 201--246.

\bibitem{HT1991}  {\bibname J. W. P. Hirschfeld \and J. A. Thas}, {\it General Galois Geometries}, Oxford Mathematical Monographs, Oxford, 1991-

\bibitem{MS1977}   {\bibname F. J. MacWilliams \and N. J. A. Sloane}, {\it The Theory of Error-Correcting Codes}, North-Holland, 1977.

\bibitem{RS1986}  {\bibname R. M. Roth \and G. Seroussi}, On MDS extensions of generalized Reed--Solomon codes, {\it IEEE Transactions on Information Theory}, {\bf 32} (1986) 349--354.

\bibitem{Storme1992} {\bibname L. Storme}, Completeness of normal rational curves, {\it J. Algebraic Combin.}, {\bf 1} (1992) 197--202.

\bibitem{Vardy2006}  {\bibname A. Vardy}, {\tt http://media.itsoc.org/isit2006/vardy/handout.pdf}, 2006

\end{thebibliography}
\end{document}